
\documentclass{daj}
\usepackage{amsmath,amssymb,amsfonts,amsthm,bbm,graphicx,color,tikz}
\usepackage{bbm}

\dajAUTHORdetails{%
  title = {Fixed-Energy Harmonic Functions}, 
  author = {Aaron Abrams and Richard Kenyon},
  plaintextauthor = {Aaron Abrams and Richard Kenyon},
    %
    %
    %
    %
    %
   %
  keywords = {Dirichlet problem, Dirichlet energy, harmonic function, bipolar orientation, totally real number field, rectangle tiling},
}   

\dajEDITORdetails{%
   year={2017},
   number={18},
   received={24 June 2016},   
   revised={8 March 2017},    
   published={6 December 2017},  
   doi={10.19086/da.2730},       
}   

\newcommand{\Z}{{\mathbb Z}}
\newcommand{\R}{{\mathbb R}}
\newcommand{\C}{{\mathbb C}}
\newcommand{\Q}{{\mathbb Q}}

\newcommand{\E}{{\mathcal E}}

\newcommand{\G}{{\mathcal G}}
\newcommand{\F}{{\mathcal F}}

\newcommand{\M}{{\mathcal M}}
\newcommand{\eps}{\epsilon}
\newcommand{\be}{\begin{equation}}
\newcommand{\ee}{\end{equation}}
\newcommand{\sgn}{\text{sgn}}
\newcommand{\old}[1]{}

\newtheorem{theorem}{Theorem}

\newtheorem{cor}[theorem]{Corollary}

\newtheorem{conjecture}{Conjecture}
\newcommand{\Gal}{\text{Gal}}
\newcommand{\one}{{\mathbbm{1}}}

\begin{document}

\begin{frontmatter}[classification=text]

\title{Fixed-Energy Harmonic Functions} 

\author[a]{Aaron Abrams\thanks{Research supported by Simons Foundation award 281189}}
\author[r]{Richard Kenyon\thanks{Research supported by NSF grant DMS-1208191 and the Simons Foundation award 327929}}

\begin{abstract}
We study the map from conductances to edge energies for harmonic functions 
on finite graphs with Dirichlet boundary conditions. 
We prove that for any compatible acyclic orientation and choice of energies there is a unique
choice of conductances such that the associated harmonic function realizes those orientations and energies.
We call the associated function \emph{enharmonic}. For rational energies and boundary data
the Galois group of $\Q^{tr}$ (the totally real algebraic numbers) over $\Q$ permutes the
enharmonic functions, acting on the set of compatible acyclic orientations.
A consequence is the non-tileability of certain polygons by rational-area rectangles. 

For planar graphs there is an enharmonic conjugate
function; together these form the real and imaginary parts of a ``fixed energy'' analytic function.
In the planar scaling limit for $\Z^2$ (and the fixed south/west orientation), 
these functions satisfy a nonlinear analog of the Cauchy-Riemann
equations, namely
\begin{eqnarray*}u_xv_y &=& 1\\u_yv_x&=&-1.\end{eqnarray*}
We give an analog of the Riemann mapping theorem for these functions, as well as a variational
approach to finding solutions in both the discrete and continuous settings. 
\end{abstract}
\end{frontmatter}

\section{Overview}

\subsection{The Dirichlet problem}

The classical Dirichlet problem is to find a harmonic function on a domain that takes specified values
on the boundary.  It is the mathematical abstraction of physical equilibrium problems arising in 
electromagnetism, fluid flow, gravitation, and other areas.  Solutions to the Dirichlet problem describe 
physical phenomena from the vibrations of a drum to the dissipation of heat.  Dirichlet himself was 
investigating the stability of the solar system \cite{dirichlet.bio}.
In the century and a half since Dirichlet's work, there have been countless investigations into
various versions, modifications, generalizations, and special cases of the Dirichlet problem.

We study the discrete Dirichlet problem on a graph, and in particular we are interested in the connection between 
the \emph{edge conductances} and the \emph{edge energies} of the resulting solution.
In the classical problem one fixes the conductances, and then minimizing the Dirichlet energy gives 
a unique solution to the Dirichlet problem, namely a harmonic function on the vertices called the \emph{voltage}.
The edge energies are then obtained from this solution. Thus there is a map, soon called $\Psi$,
from edge conductances to edge energies.

Here we reverse the problem and fix the desired energies of the harmonic function, determining
which choices of conductances lead to these energies.  We show (Theorem \ref{J=1}) that in suitable 
coordinates $\Psi$ has a surprisingly simple Jacobian, which makes analysis of the point preimages very clean.
This set of preimages is finite, parameterized by a certain set of acyclic orientations of the graph 
(Theorems \ref{A} and \ref{B}).  Moreover, one can determine the Dirichlet solutions through a simple equation, 
the \emph{enharmonic Laplace equation} (Theorem \ref{maxima}).  Remarkably, the cardinality of the solution set 
is independent of the boundary values, as long as these are distinct (Corollary \ref{numberoforientations}).  

We begin in Section \ref{sec:main} with notation and an overview of our main results showing existence of 
solutions and giving variational descriptions of these solutions.  In Sections \ref{sec:degree}, \ref{sec:galois},
\ref{sec:analytic}, \ref{sec:tilings} we give consequences of our theorems that highlight connections between 
the Dirichlet problem on a graph and other areas of mathematics, such as number theory (Theorem \ref{allfields}), 
discrete analytic functions (Theorem \ref{scalingthm}), and rectangle tilings (Corollaries \ref{isotopy} and \ref{cuberoot}).  
Some proofs are given; longer ones are deferred to later sections.

Several explicit examples of the main theorems are given in Section \ref{sec:example}.  
In Section \ref{sec:jacobian} we study the Jacobian of $\Psi$.  Sections \ref{sec:AB}, \ref{relax}, \ref{fields}
contain proofs. We pose a few questions in Section \ref{questions}.

\subsection{Main results}\label{sec:main}

Let $\G=(V,E)$ be a connected graph and $B\subset V$ a subset of $\ge 2$ vertices, which we refer to as \emph{boundary vertices}.
We assume that each edge of $\G$ lies on a simple path from a vertex of $B$ to another vertex of $B$.
Let $c_e>0$ be a \emph{conductance} for each edge $e$.  It will sometimes be convenient to refer to edges
by their endpoints, e.g., we also write $c_{xy}$ for the same conductance if $e$ is the edge with endpoints $x,y$.

Given a function $u:B\to\R$
the classical Dirichlet problem is to find a function $f$ on $V$, equal to $u$ on $B$ and 
which minimizes the \emph{Dirichlet energy}
$$\E(f) = \sum_{e=xy} c_e(f(x)-f(y))^2,$$
where the sum is over all edges of $\G$. 
There is a unique 
minimizing function $h$; it is the unique function harmonic at each vertex of 
$V\setminus B$, that is satisfying, for $x\not\in B$,
$$0=\Delta h(x):=\sum_{y\sim x}c_e(h(x)-h(y))$$
where the sum is over neighbors $y$ of $x$.
The function $h$ is sometimes referred to as the \emph{potential} (or, in the context of resistor networks, the \emph{voltage}).
The \emph{potential drop} across an edge $e=xy$ is $h(x)-h(y)$.

We define the \emph{energy} of $h$ on an edge $e=xy$ to be $c_e(h(x)-h(y))^2$,
and 
\be\label{Psiu}\Psi=\Psi_{u}:(0,\infty)^E\to[0,\infty)^E\ee 
to be the map from the set of edge conductances to the set of energies of the associated harmonic function $h$.
See an example in \eqref{Psi1} below. 

Our main theorem relates the conductances and edge energies of a harmonic function. (See the definition of compatible orientation below.)

\begin{theorem}[Existence]\label{A}
Let the graph $\G$, boundary $B$, and boundary values $u:B\to\R$ be fixed.
For each compatible orientation $\sigma$ of $\G$ and tuple $\E\in(0,\infty)^E$ there is a unique choice of positive conductances
$\{c_e\}$ on edges so that the associated harmonic function $h$ has $\sgn(dh)=\sigma$ and
energies $\E$.
\end{theorem}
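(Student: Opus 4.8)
The plan is to fix the orientation $\sigma$ and the energies $\E\in(0,\infty)^E$ and to recast the problem entirely in terms of the unknown vertex function $h$. The key observation is that the conductances are not free once $h$ is known: if $\sgn(dh)=\sigma$ then every edge difference $h(x)-h(y)$ is nonzero, so demanding energy $\E_{xy}=c_{xy}(h(x)-h(y))^2$ forces $c_{xy}=\E_{xy}/(h(x)-h(y))^2>0$. Substituting this into the harmonicity condition $\Delta h(x)=0$ for $x\notin B$ and cancelling one factor of $h(x)-h(y)$ turns the Laplace equation into the \emph{enharmonic} equation
$$\sum_{y\sim x}\frac{\E_{xy}}{h(x)-h(y)}=0\qquad(x\notin B).$$
Thus the theorem is equivalent to the statement that, for each compatible $\sigma$, there is a unique $h$ with $h|_B=u$ and $\sgn(dh)=\sigma$ solving this system; the conductances are then read off by the displayed formula, and this $h$ is automatically \emph{the} harmonic function for those conductances.

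First I would introduce the open convex polytope $P_\sigma=\{h:V\to\R : h|_B=u,\ \sigma_e\,dh_e>0\ \forall e\}$ of functions realizing the prescribed signs. It is nonempty precisely because $\sigma$ is compatible (a realizing harmonic function lies in it), and I would use the definition of compatible orientation (acyclicity together with the requirement that local maxima and minima occur only on $B$) to argue that $P_\sigma$ is bounded: following edges monotonically from any interior vertex reaches $B$ in both directions, pinching each value $h(v)$ between boundary values. Hence $\overline{P_\sigma}$ is a compact convex polytope whose relative boundary is exactly the locus where some edge difference vanishes.

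The heart of the argument is variational. On $P_\sigma$ set $w_e=\sigma_e\,dh_e>0$ and define
$$F(h)=-\sum_{e}\E_e\log w_e.$$
Each summand is a positive multiple of $-\log$ of a linear form, so $F$ is convex; it is \emph{strictly} convex because the signed edge vectors $\nabla w_e=\sigma_e(\one_x-\one_y)$ span the space of variations supported on $V\setminus B$ (any $\delta h$ annihilating all of them is locally constant, hence zero since it vanishes on $B$ and every edge of $\G$ lies on a path to $B$). A direct computation gives $\partial F/\partial h(x)=-\sum_{y\sim x}\E_{xy}/(h(x)-h(y))$, so critical points of $F$ are exactly the enharmonic functions. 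Since $\overline{P_\sigma}$ is compact and $F\to+\infty$ at every boundary point (some $w_e\to0^+$ forces a single $-\log$ term to $+\infty$ while boundedness keeps all other terms bounded below), $F$ attains its minimum at an interior point, and strict convexity makes that minimizer unique. This yields the desired $h$, and hence the unique positive conductances $c_e=\E_e/(dh_e)^2$.

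The main obstacle I anticipate is the geometric input about the polytope: making precise, from the definition of compatible orientation, that $P_\sigma$ is both nonempty and bounded. Nonemptiness is essentially built into compatibility, but boundedness requires the extremum-on-boundary property of compatible acyclic orientations, and the blow-up of $F$ at the boundary (hence the existence of a minimizer rather than a mere critical point) depends on it. Once compactness of $\overline{P_\sigma}$ is in hand, convexity does the rest cleanly; uniqueness is automatic from strict convexity, matching the ``unique choice of conductances'' in the statement.
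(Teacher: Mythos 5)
Your argument is correct, but it is a genuinely different route from the paper's proof of Theorem \ref{A}: what you have written is essentially the paper's proof of Theorem \ref{maxima} (the variational characterization, proved in Section \ref{relax}), together with the observation that Theorem \ref{A} follows from it. The paper instead proves Theorem \ref{A} degree-theoretically: it first realizes each compatible $\sigma$ by \emph{some} conductances (choosing $f\in\F_u(\sigma)$ and a $\sigma$-compatible divergence-free flow $\omega$, then setting $c_e=\omega(e)/df(e)$), and then shows that $\Psi$ has nonvanishing Jacobian wherever no energy vanishes (Corollary \ref{J=1c}, resting on the computation $J_{\log}=P_{cyc}-P_{cob}$ of Theorem \ref{J=1}), that $\Psi$ is proper, and that each sign-chamber $U_\sigma$ of conductance space is connected; hence $\Psi|_{U_\sigma}$ is a covering map onto the simply connected space $(0,\infty)^E$, i.e.\ a homeomorphism. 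Your convex-minimization argument --- $F=-\log M$ on the bounded open polytope $\F_u(\sigma)$, blow-up at the boundary plus strict convexity giving a unique interior critical point, which is exactly an enharmonic function --- is more elementary and self-contained: it avoids properness, the Jacobian, and covering-space topology, and it delivers the enharmonic Laplace equation and Theorem \ref{maxima} along the way. (Your Hessian/spanning argument for strict convexity is in fact tighter than the paper's appeal to analyticity.) What the paper's route buys in exchange is the Jacobian formula itself, which is of independent interest and feeds directly into the degree count of Theorem \ref{B}.

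One caveat: your parenthetical justification that $P_\sigma=\F_u(\sigma)$ is nonempty because ``a realizing harmonic function lies in it'' is circular --- the existence of such a harmonic function is precisely what Theorem \ref{A} asserts. The fix is the purely combinatorial construction the paper uses: since $\sigma$ is acyclic and compatible with the boundary values, a function inducing $\sigma$ can be built from a linear extension of $\sigma$ viewed as a partial order (respecting the constraint that there are no $\sigma$-oriented paths from lower to higher-or-equal boundary values). With that substitution, and with your boundedness argument from the no-interior-extrema property, the proof is complete.
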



Let $\F_u$ be the set of real-valued functions on $\G$ taking value $u$ on $B$ and which have no interior extrema,
that is, no local maxima or local minima except on $B$. 
For any choice of positive conductances, a harmonic function with boundary values $u$ on $B$
necessarily lies in $\F_u$, by the maximum principle for harmonic functions.

Suppose that $f\in\F_u$ takes distinct values at the two endpoints of 
each edge of $\G$.  Then $f$ induces an orientation
of the edges of $\G$, from larger values to smaller values. This orientation has the following three
obvious properties:

\begin{enumerate}
\item it is acyclic,
that is, there are no oriented cycles,
\item it has no interior sinks or sources (as these would
correspond to interior extrema of $f$),
\item there are no oriented paths from
lower boundary values to higher (or equal) boundary values. 
\end{enumerate}

Given $u$, we call an orientation satisfying the above properties \emph{compatible (with $u$)},
and let $\Sigma_u$ be the set of orientations compatible with $u$.
It is not hard to see that for each $\sigma\in\Sigma_u$
there is a function $h\in\F_u$ inducing $\sigma$, that is,
with $dh(e) := h(x)-h(y)$ nonzero on every edge $e=xy$,
and having sign $\sgn(dh(e))=\sigma(e)$ on each edge. The set
$\F_u(\sigma)$ of functions $h\in\F_u$ inducing the orientation $\sigma$ is therefore 
an \emph{open polytope} $\F_u(\sigma)$, since it is defined by (strict) linear inequalities.

The most basic setting of Theorem \ref{A} above is the case where the boundary $B$ consists in just two vertices. 
Let $B=\{v_0,v_1\}$ and $u(v_0)=0, u(v_1)=1$. 
Then $\Sigma_u$ consists of acyclic orientations
which have a unique sink, at $v_0$, and a unique source, at $v_1$. These are called \emph{bipolar orientations}.
Their cardinality is the \emph{beta invariant} of the graph $\tilde\G$, in which an edge is added to $\G$ from
$v_0$ to $v_1$ if there is no such edge already present. The beta invariant, also known as the \emph{chromatic invariant}, is
the derivative at $1$ of the chromatic polynomial of $\tilde \G$, or equivalently the coefficient of $x$ (and of $y$) 
in the Tutte polynomial
$T(x,y)$, see \cite{dFMR}.

By Theorem \ref{A} above, if we fix a graph $\G$ with boundary $B$, a function $u:B\to\R$, and on each edge $e$ an energy $\E_e>0$, the
set of functions in $\F_u$ with energies $\{\E_e\}$ is in bijection with the set $\Sigma_u$ of compatible orientations of $\G$.
We call these functions \emph{enharmonic} (short for ``energy-harmonic'') with respect to the energies $\{\E_e\}$.
Note that a function that is enharmonic with respect to certain energies is also harmonic with respect to certain
conductances, and vice versa; the difference in terminology indicates only which data were used to define the function.

\begin{theorem}[Characterization]\label{maxima}
Let the graph $\G$, boundary $B$, and boundary values $u:B\to\R$ be fixed.
Enharmonic functions with energies $\E\in(0,\infty)^E$ are precisely the local maxima on $\F_u$ of the functional
\be\label{Mfuncl}
M(h) = \prod_{e=xy\in E}|h(x)-h(y)|^{\E_e}.\ee  
The enharmonic function $h$ with energies $\E$ and satisfying
$dh=\sigma$ is the unique maximizer of $M(h)$ on the polytope $\F_u(\sigma)$.
An enharmonic function is one which satisfies
the (discrete) \emph{enharmonic Laplace equation} $Lh(x)=0$ at every interior vertex $x$, 
where $L$ is the nonlinear operator
\be\label{EHLap}
Lh(x) = \sum_{y\sim x} \frac{\E_{xy}}{h(x)-h(y)}\ee
and the sum is taken over all vertices $y$ adjacent to $x$.
\end{theorem}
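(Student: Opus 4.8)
The plan is to derive the enharmonic Laplace equation directly from the definition, reinterpret it as the first-order optimality condition for $M$, and then use strict concavity of $\log M$ on each polytope to pin down the maximizers. First I would unwind the definition of enharmonic: a function $h\in\F_u$ with $\sgn(dh)=\sigma$ is enharmonic with energies $\E$ precisely when there exist positive conductances $c_e$ making $h$ harmonic and satisfying $c_e(dh(e))^2=\E_e$ on each edge $e=xy$. The energy condition forces $c_e=\E_e/(dh(e))^2$, and substituting this into the harmonicity relation $\sum_{y\sim x}c_{xy}(h(x)-h(y))=0$ collapses it to $\sum_{y\sim x}\E_{xy}/(h(x)-h(y))=Lh(x)=0$. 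This establishes the equivalence between being enharmonic and satisfying $Lh=0$ at every interior vertex (the last assertion), and it records the conductances that must accompany a given enharmonic $h$.

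Next I would pass to $\log M(h)=\sum_{e=xy}\E_e\log|h(x)-h(y)|$ and compute its gradient in the interior vertex values, the boundary values being fixed. A direct differentiation gives $\partial\log M/\partial h(x)=Lh(x)$ for each interior $x$, so the critical points of $M$ on $\F_u$ at which $M>0$ (no edge difference vanishing) are exactly the solutions of $Lh=0$, namely the enharmonic functions. Since $\F_u$ is open and $M$ is smooth wherever all edge differences are nonzero, any local maximum with $M>0$ is such a critical point; and no point with $M=0$ can be a local maximum, as $0$ is the minimum value of $M$ and nearby generic perturbations make $M>0$. This yields the equivalence in the first assertion, once a maximizer is known to exist.

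The crux is strict concavity. Computing the Hessian of $\log M$ in the interior variables, the off-diagonal entry at an adjacent pair $x\sim z$ is $+\E_{xz}/(h(x)-h(z))^2$ and the diagonal entry at $x$ is $-\sum_{y\sim x}\E_{xy}/(h(x)-h(y))^2$. Writing $c_{xy}=\E_{xy}/(h(x)-h(y))^2>0$, this Hessian is exactly minus the Dirichlet Laplacian with conductances $c_{xy}$ restricted to the interior vertices. Because $\G$ is connected with nonempty boundary, this restricted Laplacian is positive definite, so the Hessian is negative definite and $\log M$ is strictly concave on each polytope $\F_u(\sigma)$. I expect this identification of the Hessian with a genuine weighted graph Laplacian, and the resulting definiteness, to be the main point; everything else is bookkeeping.

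Finally I would assemble the maximizer statement. The polytope $\F_u(\sigma)$ is bounded, since the maximum principle confines every $h\in\F_u$ between $\min u$ and $\max u$; on its closure $M$ is continuous, vanishes on the boundary faces where some $dh(e)\to 0$, and is strictly positive inside. Hence $M$ attains its maximum over $\overline{\F_u(\sigma)}$ at an interior critical point, which is therefore enharmonic and, by strict concavity, is the unique critical point and the unique maximizer on $\F_u(\sigma)$. Because this maximizer lies in the open polytope, a neighborhood of it within $\F_u$ stays inside $\F_u(\sigma)$, so it is also a local maximum on all of $\F_u$. Invoking Theorem \ref{A} for existence of the enharmonic function realizing $\sigma$ and $\E$ identifies this maximizer with it, completing the proof of all three assertions.
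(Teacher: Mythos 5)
Your proposal is correct, and its overall architecture matches the paper's: restrict to an orientation polytope $\F_u(\sigma)$, observe that the critical points of $\log M$ in the interior variables are exactly the solutions of the enharmonic equation \eqref{EHLap} (equivalently, functions harmonic for the conductances $c_e=\E_e/(h(x)-h(y))^2$, which then have energies $\E_e$), and use strict concavity together with the vanishing of $M$ on the boundary of the compact closure of $\F_u(\sigma)$ to produce a unique interior maximizer. Where you genuinely diverge is the crux step, strict concavity. The paper argues softly: $\log M$ is a sum of concave functions $\E_e\log\bigl(\sigma(e)(h(x)-h(y))\bigr)$, hence concave, and strictness is deduced from analyticity plus the blow-down to $-\infty$ at the polytope boundary (an affine direction would contradict this). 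You instead compute the Hessian explicitly and identify it as minus the weighted Dirichlet Laplacian with conductances $c_{xy}=\E_{xy}/(h(x)-h(y))^2$ restricted to the interior vertices; positive definiteness of that reduced Laplacian (connected graph, nonempty boundary) gives negative definiteness of the Hessian at every point of $\F_u(\sigma)$. This is more quantitative than the paper's argument, and it has the pleasant feature that the conductances attached to $h$ reappear as the Hessian coefficients---the same electrical identification that underlies Theorem \ref{J=1}. A second difference: you explicitly rule out local maxima of $M$ on $\F_u$ at points where $M=0$, a point the paper passes over silently. Your perturbation argument there is right provided ``no interior extrema'' is read in the weak sense (every interior vertex has a strictly larger and a strictly smaller neighbor), which makes $\F_u$ open, so that small generic perturbations of the interior values stay in $\F_u$ and make all edge differences nonzero; this is also the reading under which the theorem's ``precisely'' is accurate. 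Finally, your appeal to Theorem \ref{A} at the end is harmless but unnecessary: the interior maximizer you construct is itself enharmonic with the required orientation and energies, so your argument independently reproves existence.
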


In practice, for given energies $\{\E_{e}\}$, determining the function $h$ and the conductances
$\{c_{e}\}$ is most easily done via the equation (\ref{EHLap}).  We prove Theorem \ref{maxima}
in Section \ref{relax}.

\subsection{The degree of $\Psi$}\label{sec:degree}

The map $\Psi_u$ of (\ref{Psiu}) from conductances to energies is a rational map (i.e. its coordinate functions are ratios
of polynomial functions of the conductances).
To see this, note first that to obtain the harmonic function $f$ as a function of the conductances and boundary data, 
we have to invert the Laplacian which has entries which are linear polynomials in the conductances; the energies are then 
obtained from $\E_e = c_e(f(x)-f(y))^2$.
Thus $\Psi_u$ can be extended to a rational map $$\Psi_u:\C^E\to\C^E.$$
The inverse map $\Psi_u^{-1}$ is algebraic. Remarkably, for positive real energies all inverse images are positive real: 

\begin{theorem}\label{B}The rational map $\Psi_u$ is of degree $|\Sigma_u|$ over $\C$. 
\end{theorem}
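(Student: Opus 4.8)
The plan is to compute the degree as the cardinality of a generic complex fiber of $\Psi_u$ and to identify that fiber with the critical points of a \emph{master function} attached to a hyperplane arrangement. First I would recall that the degree of a dominant rational map between varieties of the same dimension is the number of points in a generic fiber. Fix a generic $\E\in\C^E$. Given conductances $c$, the harmonic function $h$ is recovered by inverting the (linear-in-$c$) Laplacian, after which $\E_e=c_e\,(dh(e))^2$; conversely, away from the locus where some $dh(e)$ vanishes, one recovers $c_e=\E_e/(dh(e))^2$ from $h$. Under this birational correspondence $c\leftrightarrow h$, a point of $\Psi_u^{-1}(\E)$ is exactly a solution $h$, with $dh(e)\neq 0$ on every edge, of the enharmonic Laplace system $Lh(x)=\sum_{y\sim x}\E_{xy}/(h(x)-h(y))=0$ of Theorem \ref{maxima}, taken at every interior vertex $x$. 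Thus $\deg\Psi_u$ equals the number of such complex solutions for generic $\E$.

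Next I would observe that this system is precisely the critical-point equation of the (multivalued) master function $\log M(h)=\sum_{e}\E_e\log(dh(e))$ on the affine space $A_u=\{h: h|_B=u\}$, whose dimension is the number of interior vertices: indeed $\partial_{h(x)}\log M=\sum_{y\sim x}\E_{xy}/(h(x)-h(y))=Lh(x)$. The linear forms $dh(e)=h(x)-h(y)$, as $e=xy$ ranges over edges with an interior endpoint, define a hyperplane arrangement $\mathcal A_u$ in $A_u$ (edges joining two boundary vertices contribute only constant factors to $M$ and may be dropped, using that the boundary values are distinct). By the theorem of Orlik--Terao on critical points of master functions, for nonresonant weights—and a generic $\E$ is nonresonant—the number of critical points, counted with multiplicity, equals, up to sign, the Euler characteristic of the complement of $\mathcal A_u$, which by Zaslavsky's theorem is the number of bounded chambers of $\mathcal A_u$. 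For positive real weights Varchenko's theorem moreover places exactly one nondegenerate critical point (a maximum of $\log M$) in each bounded chamber and none elsewhere; this simultaneously recovers Theorem \ref{A} and the assertion that for positive real energies every complex preimage is in fact positive real.

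It remains to identify the bounded chambers of $\mathcal A_u$ with the compatible orientations. A chamber is a region $\F_u(\sigma)$ on which the signs $\sgn(dh(e))=\sigma(e)$ are constant, so chambers correspond to achievable orientations. Such a region is nonempty precisely when $\sigma$ admits no oriented path from a lower to a higher-or-equal boundary value (property (3)), and it is bounded precisely when $\sigma$ has no interior source or sink (property (2)), since an interior source (resp. sink) lets the value there be increased (resp. decreased) without bound while preserving all signs; acyclicity (property (1)) is automatic for any $\sigma$ induced by a function. Hence the nonempty bounded chambers are exactly the $\F_u(\sigma)$ with $\sigma\in\Sigma_u$, so their number is $|\Sigma_u|$. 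Combining the three steps yields $\deg\Psi_u=|\Sigma_u|$.

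The main obstacle is the exact count of complex critical points. The lower bound $\deg\Psi_u\ge|\Sigma_u|$ is soft: a generic positive $\E$ is a regular value, its Jacobian being a nonvanishing monomial by Theorem \ref{J=1}, so the $|\Sigma_u|$ positive real preimages supplied by Theorem \ref{A} are distinct and already force the degree up. The real content is the matching upper bound—that no spurious complex critical points appear. Because the graphic arrangement $\mathcal A_u$ is highly non-generic (its hyperplanes are far from general position), a naive B\'ezout count overshoots, and I would instead need the form of the Orlik--Terao/Varchenko theorem valid for arbitrary real arrangements together with Zaslavsky's bounded-region count. Verifying that $\mathcal A_u$ is essential—which follows from connectivity of $\G$ and the hypothesis that each edge lies on a simple path between boundary vertices, together with distinctness of the boundary values—and that a generic $\E$ is nonresonant is the technical heart of this final step.
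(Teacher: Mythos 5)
Your proposal is correct in substance but follows a genuinely different route from the paper's. The paper deduces Theorem \ref{B} from its Theorem \ref{A}: that theorem (proved via properness of $\Psi_u$ on each sign sector, the nonvanishing Jacobian of Theorem \ref{J=1}, and convexity of the sectors) already supplies exactly $|\Sigma_u|$ positive real preimages of any positive real energy vector, so all that remains is to show such a vector has \emph{no} non-real preimages; the paper does this with a short self-contained maximum-principle argument, rewriting the enharmonic equation so that $\overline{h(x)}$ is a convex combination of the $\overline{h(y)}$ with positive weights $\E_{xy}/|h(x)-h(y)|^2$, which forces $h$ (hence each $c_e=\E_e/(dh(e))^2$) to be real and positive since the boundary values are real; Zariski density of $(0,\infty)^E$ then converts this fiber count into the degree. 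You instead outsource the count to hyperplane-arrangement theory: fiber $=$ critical set of the master function on the slice $\{h|_B=u\}$, Orlik--Terao and Zaslavsky give the nonresonant count as the number of bounded chambers, Varchenko places one real nondegenerate critical point in each bounded chamber for positive weights, and your combinatorial lemma identifies bounded chambers with $\Sigma_u$. What your route buys is an independent derivation of Theorem \ref{A}'s count rather than a reliance on it, and the identification of $|\Sigma_u|$ with a topological invariant of the arrangement complement; note that your appeal to Varchenko's reality statement is exactly the general-arrangement form of the paper's convex-combination trick. Two points to tighten: (i) your boundedness criterion is argued in one direction only (source/sink implies unbounded); the converse needs a recession argument --- an unbounded chamber contains a ray $h+t\phi$ with $\phi|_B=0$, $\phi\neq 0$, and a vertex maximizing $\phi$ (and, among those, maximizing $h$) is an interior source; (ii) Orlik--Terao counts critical points \emph{with multiplicity} while the degree counts points of a reduced generic fiber, so either invoke reducedness of generic fibers in characteristic zero, or note that Varchenko's theorem alone plus Zariski density of the positive orthant (which must meet the open dense set realizing the degree) already closes the argument --- which is precisely the logical shape of the paper's proof.
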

By this we mean that there is an open dense set of $\C^E$ such that the cardinality of the preimage of any point 
in this set is $|\Sigma_u|$.  See Section \ref{sec:AB} for a proof.
Since Theorem \ref{A} provides $|\Sigma_u|$ preimages of any point with positive real coordinates, 
such a preimage must consist only of points of $\C^E$  with positive real coordinates: 
$$\Psi_u^{-1}((0,\infty)^E)\subset(0,\infty)^E.$$
An immediate consequence is

\begin{cor}\label{numberoforientations}
The number of compatible orientations $|\Sigma_u|$ does not depend on the values of $u$, as long as $u$ 
takes distinct values at distinct boundary vertices.
\end{cor}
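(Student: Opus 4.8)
The plan is to reduce the statement to the invariance of the \emph{degree} of $\Psi_u$, which Theorem~\ref{B} identifies with $|\Sigma_u|$. First I would observe that $\Sigma_u$, \emph{as a set}, depends only on the linear order that $u$ induces on $B$. Indeed, of the three defining properties of a compatible orientation --- acyclicity, the absence of interior sinks and sources, and the absence of an oriented path from a boundary vertex of lower $u$-value to one of higher (or equal) $u$-value --- only the third refers to $u$ at all, and it refers solely to the comparisons $u(a)$ versus $u(b)$ for $a,b\in B$. Hence $\Sigma_u$ is literally unchanged as $u$ varies within a single chamber $C$ of the arrangement $\{u\in\R^B : u(x)\ne u(y)\text{ for }x\ne y\}$, so $|\Sigma_u|$ is constant on each chamber. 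It therefore suffices to prove that the common value, call it $|\Sigma_C|$, is the same for every chamber $C$.

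By Theorem~\ref{B} we have $|\Sigma_u|=\deg\Psi_u$ for every $u$ with distinct boundary values, so the task becomes showing that $\deg\Psi_u$ is independent of the chamber containing $u$. Here I would pass from the disconnected real parameter space to the irreducible complex one. Since $\Psi_u$ is a rational map whose coefficients depend rationally on $u\in\C^B$ (one inverts the Laplacian and forms $c_e(f(x)-f(y))^2$), I would assemble the maps into a single dominant rational family over the base $\C^B$ and invoke the standard spreading-out / generic-flatness fact: the function $u\mapsto\deg\Psi_u$ is constructible, so there is a Zariski-dense open set $U\subseteq\C^B$ and an integer $d$ with $\deg\Psi_u=d$ for every $u\in U$.

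To conclude I would descend from this generic complex statement to each individual real chamber. The complement $\C^B\setminus U$ lies in the zero locus of a nonzero polynomial with real coefficients, and a nonzero polynomial cannot vanish on a nonempty open subset of $\R^B$; hence the real points of $\C^B\setminus U$ have empty interior, and every (open) chamber $C$ meets $U$. Choosing any $u_0\in C\cap U$ with distinct coordinates gives $|\Sigma_C|=|\Sigma_{u_0}|=\deg\Psi_{u_0}=d$, using the chamberwise constancy from the first step and Theorem~\ref{B}. As $C$ was arbitrary, $|\Sigma_C|=d$ for every chamber, which is exactly the assertion.

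The main obstacle is the mismatch of topologies between the real and complex pictures: the set of real $u$ with distinct values is disconnected, with one component for each linear order on $B$, so one cannot simply deform $u$ through real values and appeal to continuity of the degree to equate the counts of two different orderings --- for three or more boundary vertices, distinct orderings are in general not related by any global affine reparametrization of $u$. The resolution, and the crux of the argument, is to lift to the irreducible space $\C^B$, establish constancy of the degree on a Zariski-dense open set there, and then exploit that such a set is large enough to meet every real chamber; the combinatorial first step then transports the single generic complex degree to the exact count $|\Sigma_C|$ in each chamber.
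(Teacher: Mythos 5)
Your proof is correct and is essentially the paper's own argument: the paper likewise combines generic constancy of the degree of the rational family $\Psi_u$ over the parameter space with the observation that $\Sigma_u$ is unchanged as $u$ varies over a full-dimensional set of functions with distinct boundary values, concluding that the locus where the degree differs must lie inside the set where two boundary values coincide. Your write-up simply makes explicit the steps the paper leaves terse, namely the constructibility of $u\mapsto\deg\Psi_u$ and the fact that a Zariski-dense open subset of $\C^B$ meets every open chamber of $\R^B$.
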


\begin{proof} 
The degree of a rational family of rational maps is constant for almost every parameter value. Thus for almost every choice of 
values of $u$, the degree of $\Psi_u$ is constant.
As $u$ varies among a (full-dimensional) set of functions
taking distinct values, the set of compatible orientations does not change. So the set of $u$'s 
where the degree of $\Psi_u$ is smaller is a subset
of the set $u$'s where at least two boundary values are the same. 
\end{proof}

\subsection{Galois action}\label{sec:galois}

Suppose the given energies and boundary values lie in $\Q$, with positive energies.  Then the enharmonic equation
\be\label{eHeqns}
\sum_{y\sim x} \frac{\E_{xy}}{h(x)-h(y)}=0\ee
supplemented with the boundary equations $h|_B=u$
which define the enharmonic functions, form a rational system of equations
defined over $\Q$. Thus the absolute Galois group permutes the solutions. 
Theorem \ref{B} shows that the values of $f$ generate a totally real number field.
Thus we have

\begin{cor}\label{dessins}
If the values of $u$ and the coordinates of $\E\in(0,\infty)^E$ are rational then the Galois group of 
$\Q^{tr}$ (the totally real algebraic numbers) 
over $\Q$ permutes the enharmonic functions of Theorem \ref{A}. Equivalently, $\Gal(\Q^{tr}/\Q)$
acts on $\Sigma_u$.
\end{cor}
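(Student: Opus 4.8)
The plan is to exhibit the enharmonic functions as the zero-dimensional solution set of a polynomial system with coefficients in $\Q$, to observe that the absolute Galois group $\Gal(\overline{\Q}/\Q)$ permutes any such solution set, and then to use Theorems \ref{A} and \ref{B} to show that this action in fact factors through the quotient $\Gal(\Q^{tr}/\Q)$.

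First I would clear denominators in the enharmonic Laplace equations. Since the energies $\E_{xy}$ and the boundary values $u$ are rational, multiplying the equation $\sum_{y\sim x}\E_{xy}/(h(x)-h(y))=0$ at each interior vertex $x$ by $\prod_{y\sim x}(h(x)-h(y))$ yields, together with the linear equations $h|_B=u$, a system of polynomial equations in the unknowns $\{h(x)\}_{x\in V}$ with coefficients in $\Q$. An enharmonic function is precisely a solution of this system with $h(x)\neq h(y)$ on every edge; since the system is zero-dimensional, these coordinates are algebraic numbers. Because a field automorphism $\tau\in\Gal(\overline{\Q}/\Q)$ fixes the rational coefficients, applying $\tau$ coordinatewise, $(\tau h)(x):=\tau(h(x))$, carries solutions to solutions and preserves the nonvanishing conditions $h(x)\neq h(y)$; hence $\Gal(\overline{\Q}/\Q)$ acts on the set of enharmonic functions.

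The crux is to show these solutions are totally real, so that the action descends to $\Gal(\Q^{tr}/\Q)$. Here I would combine the two theorems: Theorem \ref{A} produces $|\Sigma_u|$ real enharmonic functions, one per compatible orientation, while Theorem \ref{B}, via the bijection $c_e\leftrightarrow\E_e/(h(x)-h(y))^2$ between the conductances in $\Psi_u^{-1}(\E)$ and the enharmonic functions, bounds the number of complex solutions by the generic degree $|\Sigma_u|$ of $\Psi_u$. Therefore every complex solution is one of the real solutions of Theorem \ref{A}, and the solution set is entirely real. Now for any enharmonic $h$ and any $\tau\in\Gal(\overline{\Q}/\Q)$, the conjugate $\tau h$ is again a solution, hence again real; as this holds for all $\tau$, each value $h(x)$ has all of its conjugates real, i.e. $h(x)\in\Q^{tr}$. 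Since a conjugate of a totally real number is totally real, $\Q^{tr}$ is stable under $\Gal(\overline{\Q}/\Q)$ and is thus Galois over $\Q$, and the action on the (totally real) coordinates depends only on $\tau|_{\Q^{tr}}$. Hence the action factors through $\Gal(\Q^{tr}/\Q)$.

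Finally I would transport the action to orientations. By Theorem \ref{A} the assignment $h\mapsto\sgn(dh)$ is a bijection from the set of enharmonic functions with energies $\E$ onto $\Sigma_u$. Setting $\tau\cdot\sigma:=\sgn\bigl(d(\tau h_\sigma)\bigr)$, where $h_\sigma$ is the unique enharmonic function realizing $\sigma$, gives a well-defined action of $\Gal(\Q^{tr}/\Q)$ on $\Sigma_u$ intertwining the action on functions. The main obstacle is the totally-real step: everything else is formal Galois theory, but it is the reality of \emph{all} complex solutions that forces the conjugates into $\Q^{tr}$, and this rests essentially on matching the count $|\Sigma_u|$ of Theorem \ref{A} with the generic degree of $\Psi_u$ from Theorem \ref{B}.
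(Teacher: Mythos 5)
Your proposal is correct and is essentially the paper's own argument: the paper likewise notes that the enharmonic equations together with $h|_B=u$ form a rational system over $\Q$, so that the absolute Galois group permutes solutions, then invokes Theorems \ref{A} and \ref{B} to conclude that all complex solutions are real (hence all values are totally real), so the action factors through $\Gal(\Q^{tr}/\Q)$ and transports to $\Sigma_u$ via $h\mapsto\sgn(dh)$. The one fine point: the literal statement of Theorem \ref{B} (degree $=|\Sigma_u|$ over an open \emph{dense} set) does not by itself bound the number of solutions at your particular, possibly non-generic, rational $\E$, nor does it rule out positive-dimensional components (your ``the system is zero-dimensional'' is asserted, not proved); what closes this is the paper's proof of Theorem \ref{B}, which shows directly by a convex-hull/maximum-principle computation that \emph{every} complex preimage of \emph{any} positive real energy vector is positive real, i.e. $\Psi_u^{-1}((0,\infty)^E)\subset(0,\infty)^E$ pointwise rather than generically -- this is the fact your counting step implicitly uses, and it is worth citing in that stronger form.
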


We conjecture that for 3-connected graphs, the action is transitive\footnote{A graph with boundary is $3$-connected if,
upon adding edges connecting every pair of boundary vertices,
it cannot be disconnected by removing two vertices.}:
\begin{conjecture}
If $G$ is $3$-connected the above Galois action is transitive for generic\footnote{By generic we mean on a Zariski open set.} rational energies and boundary data. 
\end{conjecture}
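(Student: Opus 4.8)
The plan is to recast transitivity of the Galois action as an irreducibility statement and then to separate the (essentially free) geometric content from the genuinely hard arithmetic content. The enharmonic functions with energies $\E$ and boundary data $u$ form the fibre of the degree-$|\Sigma_u|$ rational map $\Psi_u$ of Theorem \ref{B} over the point $\E$, and Theorem \ref{A} identifies each solution with the orientation it induces. Transitivity of $\Gal(\overline{\Q}/\Q)$ on this fibre is equivalent to irreducibility over $\Q$ of the \emph{enharmonic resolvent}, that is, of the system (\ref{eHeqns}) together with $h|_B=u$ after eliminating all but one coordinate of $h$. So the goal is: for $\G$ $3$-connected and generic rational $(\E,u)$, this resolvent is irreducible.

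First I would dispose of the geometric side. The graph of $\Psi_u$ is parameterised by the conductances, hence is a Zariski-open subset of the irreducible variety $\C^E$; since $\Psi_u$ is dominant and generically finite, its generic fibre over the function field $\Q(\E,u)$ is the spectrum of the single field $\Q(\{c_e\})$, so $\Gal$ already acts transitively on the fibre over the \emph{generic} point, and the geometric monodromy of the branched cover $\Psi_u$ is automatically transitive because its total space is irreducible. Thus the conjecture asks for no new geometric input; it asks that this transitivity survive specialization of $(\E,u)$ to rational values.

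The descent to rational parameters is where I would invoke a Hilbert-irreducibility / Bertini-type specialization argument: irreducibility of the resolvent over $\Q(\E,u)$ forces irreducibility of the specialised resolvent for all rational $(\E,u)$ outside a thin set. The delicate point, which I expect to be the main obstacle, is that the footnote reads ``generic'' as Zariski-open, whereas Hilbert irreducibility only controls a thin exceptional set, which may be Zariski-dense (rational factorizations of the resolvent occur on a dense but thin locus). A faithful proof must therefore either reinterpret ``generic'' as ``outside a thin set,'' in which case the function-field argument above essentially suffices once separability is checked and no hypothesis on $\G$ is needed, or else isolate a Zariski-stable invariant to control. The natural such invariant is \emph{primitivity}: the block systems of the monodromy group are locally constant away from the discriminant, hence constant on a Zariski-open set of parameters, so a statement about blocks is genuinely Zariski-generic even though bare transitivity at a rational point is not.

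This is precisely where $3$-connectivity should enter, and it clarifies what the conjecture is really about. A $2$-separation $\{a,b\}$ of $\tilde{\G}$ makes the potential difference $h(a)-h(b)$ across the cut the solution of a lower-degree sub-system, so the level sets of $\sigma\mapsto\sigma|_{\mathrm{cut}}$ form a nontrivial partition of $\Sigma_u$ that is preserved by $\Gal$; the cover factors through this ``cut resolvent'' and the monodromy, while transitive, becomes imprimitive, and on the thin locus where the cut resolvent factors over $\Q$ the action is genuinely intransitive. I would therefore prove the sharp statement by induction on the structure of $3$-connected graphs, via Tutte's wheel theorem or an ear decomposition, showing that each elementary building operation (edge addition, vertex splitting) preserves irreducibility and primitivity of the resolvent, with wheels serving as base cases handled by direct computation, and showing conversely that every $2$-cut produces a genuine block. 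The hard part is the inductive step: tracking how the resolvent and its monodromy group transform under these operations and ruling out \emph{all} nontrivial block systems when $\G$ is $3$-connected, equivalently proving that no nonconstant $\Gal$-invariant algebraic function separates the compatible orientations, and then reconciling this primitivity with the Zariski-generic phrasing of the arithmetic transitivity.
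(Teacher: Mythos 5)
First, a point of order: the statement you were asked to prove is stated in the paper as a \emph{conjecture}, with no proof offered, so the only question is whether your argument settles it --- and it does not. What you have written is a research program, not a proof: the wheel-theorem base cases, the inductive step under edge addition and vertex splitting, and the claim that every $2$-separation of $\tilde\G$ produces a genuine block system are all announced rather than carried out, and you yourself flag the inductive step as ``the hard part.'' The only parts actually established are the two soft ones: transitivity of the Galois action over the function field $\Q(\E,u)$ (correct, since the conductance space $\C^E$ is irreducible and $\Psi_u$ is dominant of degree $|\Sigma_u|$ by Theorem \ref{B}), and Hilbert-irreducibility specialization, which gives transitivity for rational data outside a \emph{thin} set. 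But note that this much holds for \emph{every} graph in the paper's class, with $3$-connectivity playing no role whatsoever; so under the thin-set reading the hypothesis of the conjecture is vacuous, which is already a warning that this route does not capture its intended content.

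Second, and more fundamentally, the invariant you propose in order to bridge the thin/Zariski gap --- primitivity of the monodromy --- cannot do the job. Transitivity at a specific rational point $(\E,u)$ is a statement about the decomposition group there, i.e.\ the image of $\Gal(\Q^{tr}/\Q)$ acting on that single fibre, and primitivity of the generic monodromy imposes no lower bound on this subgroup: the cover $x\mapsto x^2$ has primitive monodromy $S_2$, yet its fibres over rational points split on a Zariski-dense set. The paper's own example shows this is not a hypothetical worry. For the graph of Figure \ref{fig:simple} (whose completion $\tilde\G=K_4$ is $3$-connected), Section \ref{sec:quadratic} computes the solution field to be $\Q(\sqrt{\delta})$ with $\delta=A^2+4\E_b\E_c\E_d S$, where $A=\E_a\E_e-\E_b\E_d+\E_c S$ and $S$ is the sum of the energies. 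The intransitivity condition $\delta=r^2$ factors as $(r-A)(r+A)=4\E_b\E_c\E_d S$; fixing any rational $m\neq 0$ and imposing $r-A=2m$, $r+A=2\E_b\E_c\E_d S/m$ yields a single equation that is \emph{linear} in $\E_a$, hence defines a hypersurface carrying a dense set of positive rational energy tuples (for suitable $m$), and the union of these hypersurfaces over rational $m$ is Zariski-dense in the energy space. At every such rational point both enharmonic functions are rational and the action on $\Sigma_u$ (of size $2$) is trivial, hence intransitive. So intransitive rational data are Zariski-dense even for this $3$-connected graph, and no completion of your induction can prove the statement under the footnote's literal Zariski-open reading; what $3$-connectivity can plausibly buy is a Zariski-stable group-theoretic property such as primitivity, which your $2$-separation remark gestures at but does not prove, and reconciling that with the conjecture's phrasing is precisely the part of the problem that remains open.
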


Considering all acyclic orientations $\bigcup\Sigma_u$ as $\G$ and $u$ vary, this is reminiscent 
of \emph{dessins d'enfants}, a family of combinatorial objects on which the absolute 
Galois group $\Gal(\bar\Q/\Q)$ acts.  In the current setup it is the Galois group of the totally real algebraic
numbers which acts. We prove (see Section \ref{fields}) that the action is faithful, i.e., that every totally real number field arises in this way:

\begin{theorem}\label{allfields}
For every totally real number field $K$ there is a tuple $(\G,B,u,\{\E_e\})$ consisting of a graph $\G$, boundary $B$, rational boundary values $u$ and rational edge energies $\E_e>0$ such that 
$K$ is the number field generated by the values of one of the associated enharmonic functions, and all enharmonic functions
generate isomorphic number fields.
\end{theorem}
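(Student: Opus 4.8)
The plan is to realize every totally real $K$ with the simplest possible graph: a star with a single interior vertex. By the primitive element theorem write $K=\Q(\alpha)$, and let $p\in\Q[t]$ be the monic minimal polynomial of $\alpha$, of degree $n=[K:\Q]$. Because $K$ is totally real, every embedding $K\hookrightarrow\C$ has real image, so $p$ has $n$ real roots; irreducibility over a field of characteristic $0$ makes them distinct, say $\alpha_1<\cdots<\alpha_n$. I would take $\G$ to be the star with one interior vertex $w$ joined to boundary vertices $v_1,\dots,v_{n+1}$, so $B=\{v_1,\dots,v_{n+1}\}$; each edge $wv_i$ lies on the simple boundary-to-boundary path $v_i\,w\,v_j$, so the standing hypotheses on $\G$ hold. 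An enharmonic function $h$ is then determined by the single number $t:=h(w)$, and by Theorem \ref{maxima} the enharmonic Laplace equation at $w$ is $\sum_{i=1}^{n+1}\E_{wv_i}/(t-u(v_i))=0$.

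Clearing denominators turns this into $P(t)=0$, where $P(t)=\sum_i \E_{wv_i}\prod_{j\neq i}(t-u(v_j))$. The key observation is that the polynomials $b_i(t)=\prod_{j\neq i}(t-u(v_j))$ are, up to scalars, the Lagrange interpolation basis for the nodes $u(v_1),\dots,u(v_{n+1})$, hence a basis of the space of polynomials of degree $\le n$. So given any target $P$ of degree $\le n$ the coefficients are uniquely determined, and evaluating at the nodes gives them in closed form: taking $P=p$ forces $\E_{wv_i}=p(u(v_i))/\prod_{j\neq i}(u(v_i)-u(v_j))$. These are automatically rational once the boundary values are rational, so the whole construction reduces to choosing rational $u(v_i)$ that make all $n+1$ energies positive.

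That positivity is the only real obstacle, and it is exactly where total reality is used. I would order $u(v_1)<\cdots<u(v_{n+1})$, so that $\prod_{j\neq i}(u(v_i)-u(v_j))$ has sign $(-1)^{n+1-i}$. Since $p$ is monic with $n$ distinct real roots it changes sign across each root, taking sign $(-1)^{n+1-i}$ on the $i$-th of the $n+1$ open intervals cut out by the roots. Choosing a rational $u(v_i)$ in the $i$-th such interval (so $u(v_1)$ lies below all roots and $u(v_{n+1})$ above all of them) makes $\sgn p(u(v_i))=(-1)^{n+1-i}$ as well, whence every $\E_{wv_i}>0$. With these choices $P=p$ identically, so the values $h(w)$ of the enharmonic functions are precisely the roots $\alpha_1,\dots,\alpha_n$; the interlacing $u(v_i)<\alpha_i<u(v_{i+1})$ shows $h$ has a boundary neighbor both above and below its value at $w$, so $w$ is not an interior extremum, $h\in\F_u$, and $h$ induces a compatible orientation. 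Thus these are exactly the enharmonic functions of Theorem \ref{A}.

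Finally, the values taken by the enharmonic function attached to $\alpha_i$ are the rational numbers $u(v_1),\dots,u(v_{n+1})$ together with $h(w)=\alpha_i$, so they generate $\Q(\alpha_i)$; for the root $\alpha_i=\alpha$ this field is exactly $K$. As the $\alpha_i$ are the conjugate roots of the irreducible polynomial $p$, the fields $\Q(\alpha_i)$ are all isomorphic to $K$, which gives the last clause and is consistent with the $\Gal(\Q^{tr}/\Q)$-action of Corollary \ref{dessins}. I expect the only delicate point in the write-up to be the bookkeeping of signs in the positivity step; everything else is linear algebra and the maximum principle.
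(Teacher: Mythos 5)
Your proposal follows essentially the same route as the paper's proof: a star graph with a single interior vertex, where clearing denominators in the enharmonic equation identifies the tuple of edge energies with the coefficients of a degree-$n$ rational polynomial whose roots interlace the boundary values, so that the minimal polynomial of a primitive element of $K$ can be realized exactly. The only difference is in execution: the paper inverts the linear map from energies to polynomial coefficients via a Vandermonde determinant and leaves the positivity of the resulting energies implicit, whereas your Lagrange-interpolation formula $\E_{wv_i}=p(u(v_i))/\prod_{j\neq i}\bigl(u(v_i)-u(v_j)\bigr)$ exhibits the energies in closed form and, together with your sign analysis on the intervals cut out by the roots, verifies rationality and positivity explicitly --- a point the paper glosses over.
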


We don't know if Theorem \ref{allfields}
holds when the size of the boundary is constrained, for example when the boundary consists in only two points.
We show this however for quadratic fields in Section \ref{sec:quadratic}.
\medskip

\subsection{Discrete analytic functions}\label{sec:analytic}

A \emph{circular planar network} \cite{CIM} is a finite embedded planar graph $\G$ 
with conductances $c:E\to\R_{>0}$ and boundary $B$ consisting of a subset of vertices on the outer face. We define the 
\emph{dual network} $\G^*$ as the usual planar dual of the graph obtained from $\G$ by adding disjoint rays from each
boundary vertex of $\G$ to infinity.  Thus $\G^*$ has one vertex in every bounded face of $\G$ and one vertex in the
exterior face between every pair of ``consecutive'' boundary vertices of $\G$; there is a dual edge connecting dual vertices for every primal edge separating the associated faces and there is a dual edge for each of the infinite rays.
Given a harmonic function $f$ on $\G$ with boundary values $u$ on $B$, a function $g$ on $\G^*$ is said to be a \emph{harmonic conjugate} if
for any primal edge $e=xy$ we have $f(y)-f(x)=c_e(g(t)-g(s))$ where where $s,t$ are the faces adjacent to edge $xy$, with $t$ being on the right when the edge is traversed from $x$ to $y$. These equations are the \emph{discrete Cauchy-Riemann equations}.  They imply in fact that $f$ is harmonic on $\G$, and $g$ is harmonic on the dual graph
with reciprocal conductances on the dual edges. In this setting such a pair $f,g$ is also said to form a \emph{discrete analytic function}.

For a circular planar network $(\G,B,u,\{\E_e\})$ with enharmonic function $f$, an \emph{enharmonic conjugate} 
$g$
is a function $g$ on $\G^*$ satisfying
$$(g(t)-g(s))(f(y)-f(x)) = \E_{xy},$$
where $s,t$ are the faces adjacent to edge $xy$, with $t$ being on the right when the edge is traversed from $x$ to $y$. 
Enharmonicity of $f$ implies enharmonicity of $g$ on the dual graph with the same energies. 
Together $f$ and $g$ are harmonic conjugates for the associated conductances.
We call the pair $(f,g)$ the \emph{fixed-energy discrete analytic function} associated to the boundary data.
These functions are used as $x$- and $y$-coordinates in the rectangle tilings discussed in the next section.
\medskip

It is natural to consider scaling limits of enharmonic functions, that is, continuous limits of enharmonic functions on a sequence of graphs
approximating (in an appropriate sense) a region in $\R^n$. 
These scaling limits depend strongly on both the underlying graph structure and choice of orientation.
We consider here only the simplest nontrivial case where the approximating graph is a scaled subgraph of $\Z^2$, with edges
oriented south and west. 
We prove in Theorem \ref{scalingthm} below
that the natural scaling limit of an enharmonic function $f$ and its enharmonic conjugate $g$ for $\G=\eps\Z^2$ satisfy
a nonlinear analog of the Cauchy-Riemann equation:
\begin{eqnarray}\label{CR}f_x g_y &=& -1\\
f_yg_x&=&1.\nonumber
\end{eqnarray}
Moreover $f,g$ both satisfy the ``continuous enharmonic Laplace equation''
$$\frac{f_{xx}}{f_x^2}+\frac{f_{yy}}{f_y^2}=0.$$
We prove (see Section \ref{RiemannMap}) an analog of the Riemann mapping theorem in this context, namely that 
from any region in an appropriate class there is a ``fixed-energy analytic" map to a rectangle.

In \cite{Bal} a different continuous analog of the fixed-energy problem is considered.

\subsection{Tilings and networks}\label{sec:tilings}

In 1940, Brooks, Smith, Stone, and Tutte \cite{BSST}, building on work of Dehn \cite{Dehn}, 
gave a 
correspondence between the Dirichlet problem for circular planar networks (with two boundary vertices)
and tilings of a rectangle by rectangles;
they called this the \emph{Smith diagram} of the network. See Figure \ref{5tiles} below for an example.
Namely, given such a tiling, one can orient the rectangle so it sits on the $x$-axis, and then form a 
network from the tiling according to the following rules.  Place a vertex at height $y$ for each 
maximal horizontal segment in the tiling at height $y$; then, for each rectangular tile add an 
edge to the network connecting the vertices corresponding to the top and bottom of the tile. On this edge place a conductance
which is the aspect ratio (width over height) of the tile. The result is a circular planar
graph which can be interpreted as a resistor network. 

If we now imagine the top and bottom vertices connected by a battery generating a voltage differential
equal to the height of the rectangle, then the voltage function on the vertices (i.e., the $y$-values) is a 
harmonic function that solves the Dirichlet problem on the network with boundary conditions enforced 
at the top and bottom vertex. The rectangle widths are the current flows $c_e dh$, 
and the areas are the edge energies $c_e(dh)^2$.

The BSST correspondence is invertible:  beginning with a planar network with a specified source and sink
on the outer face and a specified total voltage drop, one can solve for the harmonic function and the currents in the wires.  The resulting 
network can then be represented as a rectangle tiling by reversing the process described above.  

\old{
Here is a summary of the correspondence:
\begin{equation*}
\begin{aligned}
\mbox{$y$-coordinate} & \longleftrightarrow \mbox{voltage}\\
\mbox{height of rectangle} & \longleftrightarrow \mbox{voltage drop across wire}\\
\mbox{width of rectangle} & \longleftrightarrow \mbox{current flowing through wire}\\
\mbox{aspect ratio (width/height)} & \longleftrightarrow \mbox{conductance of wire}\\
\mbox{height/width of rectangle} & \longleftrightarrow \mbox{resistance of wire}\\
\mbox{area of rectangle} & \longleftrightarrow \mbox{energy dissipated across wire, per unit time}
\end{aligned}
\end{equation*}

Note also that the network need not be planar:  one can draw a Smith diagram of a non-planar 
network as a rectangular tiling of a (non-planar) surface.
}

A corollary of Theorem \ref{A} applies to Smith diagrams.
This was originally proved in a different manner in
\cite{Wimer} and a number of times since, see \cite{Felsner}.

\begin{cor}\label{isotopy}
Given a rectangle tiling of a rectangle there is an isotopic tiling in which the rectangles have
prescribed areas.
\end{cor}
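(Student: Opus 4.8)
The plan is to run the Brooks--Smith--Stone--Tutte (BSST) correspondence in reverse, using Theorem \ref{A} to supply a harmonic function with the desired edge energies. First I would extract from the given tiling its combinatorial data. Orienting the outer rectangle so its sides are horizontal and vertical, the BSST dictionary described above associates to the tiling a planar graph $\G$ with two boundary vertices $v_0,v_1$ (the bottom and top of the outer rectangle), together with a bipolar orientation $\sigma$ of $\G$ induced by the $y$-coordinate (from higher tiles to lower), and an edge energy equal to the area of the corresponding tile. Since $B=\{v_0,v_1\}$, this $\sigma$ is precisely a compatible orientation in the sense used in Theorem \ref{A}. The key structural observation is that the isotopy class of the tiling is recorded exactly by the pair $(\G,\sigma)$ (as an embedded graph with orientation): an isotopy may alter the heights, widths, and horizontal positions of the tiles, but not their adjacency pattern nor the vertical ordering of the maximal horizontal segments.

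Next, fix boundary values $u(v_0)=0$ and $u(v_1)=1$; the height of the outer rectangle is a free scaling parameter, so normalizing it costs nothing. Let $\{\E_e\}$ be the prescribed areas, viewed as target energies with $\E\in(0,\infty)^E$. Since $\sigma$ is a compatible orientation, Theorem \ref{A} produces a unique choice of positive conductances $\{c_e\}$ whose harmonic function $h$ satisfies $\sgn(dh)=\sigma$ and has edge energies $\E$. Running the correspondence forward on $h$ reconstructs a tiling in which the tile for an edge $e=xy$ with $h(x)>h(y)$ has height $h(x)-h(y)$, width $c_e(h(x)-h(y))$, and therefore area $c_e(h(x)-h(y))^2=\E_e$, as desired.

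It remains to verify that this reconstruction is a genuine rectangle tiling of a rectangle, isotopic to the original, and this is where the main work lies. Harmonicity of $h$ at each interior vertex is exactly Kirchhoff's current law, which forces the total width of tiles meeting a given horizontal segment from above to equal the total width from below; this consistency is what permits the tiles to be assembled with no gaps or overlaps, filling an outer rectangle of height $1$ and width $\sum_e\E_e$. Because $h$ realizes the same orientation $\sigma$, the reconstructed tiling has the same embedded graph and the same vertical order of horizontal segments as the original, hence lies in the same isotopy class. The single point demanding genuine care is the assertion that $\sigma$, as a combinatorial invariant, pins down the isotopy type of the tiling and that the forward BSST map is geometrically valid on the output of Theorem \ref{A}; this is the content of the classical BSST theory (as reproved in \cite{Wimer,Felsner}), and the only new ingredient is that Theorem \ref{A} lets us prescribe the tile areas arbitrarily within a fixed isotopy class.
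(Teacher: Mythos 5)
Your proposal is correct and follows exactly the route the paper intends: the paper gives no detailed proof of Corollary \ref{isotopy}, presenting it simply as a consequence of Theorem \ref{A} via the Brooks--Smith--Stone--Tutte correspondence described in Section \ref{sec:tilings}, which is precisely the argument you spell out (extract the network and bipolar orientation from the tiling, invoke Theorem \ref{A} with the prescribed areas as energies, and rebuild the Smith diagram). Your filling-in of the details---Kirchhoff's law ensuring the tiles assemble without gaps, and the combinatorial data $(\G,\sigma)$ determining the isotopy class subject to the paper's four-corner proviso---is a faithful elaboration of what the paper leaves implicit.
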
 

Here by \emph{isotopic} we mean one is obtained from the other by displacing continuously 
the walls, floors and ceilings of rectangles in the first to form the second, without degenerating any rectangles
along the way (we also need to add a proviso that at a point where four rectangles meet only one of the two possible perturbations is allowed;
either the horizontal segments to its left and right can move independently up and down, or the vertical segments above and below can move independently left and right, but not both.)   See Section \ref{sec:example} for some examples.

Such maps are also called \emph{rectangular cartograms} \cite{KS} and \emph{floorplans} \cite{floorplans}
and are used e.g. to make geographic maps where the area represents
the measure of some quantity associated to each country, such as population.
\medskip

Another corollary of Theorems \ref{A} and \ref{B} applies to the 
tileability of polygons by rectangles of rational areas.
Such a tiling arises from a circular planar network with a boundary vertex for each horizontal segment of the boundary polygon.
Again the Dirichlet boundary values are the $y$-coordinates of the horizontal boundary segments.

\begin{cor}\label{cuberoot}
If a polygon $P$ can be tiled with rectangles of rational areas, then the horizontal edge lengths are in
a totally real extension field of $\Q(v_1,\dots,v_k)$, the field generated by the vertical edge lengths.
\end{cor}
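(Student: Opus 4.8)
The plan is to turn the tiling into a circular planar network by the construction of Section~\ref{sec:tilings} and then feed it into Theorems~\ref{A} and~\ref{B}. I would place $P$ in the plane with horizontal and vertical sides, normalizing its lowest horizontal edge to lie on the line $y=0$. Reading off the $y$-coordinate of each horizontal boundary segment by summing the signed vertical displacements along the boundary, every such coordinate is an integer combination of $v_1,\dots,v_k$ and hence lies in $F:=\Q(v_1,\dots,v_k)$. In the associated network these coordinates are precisely the boundary values $u$, while the tile areas are the edge energies $\E_e$, rational by hypothesis. So we are in the situation of an enharmonic problem $(\G,B,u,\{\E_e\})$ with $u\in F^{B}$ and $\E_e\in\Q$, and the orientation $\sigma$ induced by the tiling (each tile-edge pointing from its top segment to its bottom segment) is compatible.

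Next I would locate the relevant enharmonic function. By Theorem~\ref{maxima} the $y$-coordinates of all horizontal segments, boundary and interior, assemble into the enharmonic function $h$ realizing $\sigma$, the unique maximizer of $M$ on $\F_u(\sigma)$, characterized by $\sum_{y\sim x}\E_{xy}/(h(x)-h(y))=0$ at interior vertices together with $h|_{B}=u$. This is a polynomial system with coefficients in $F$. I claim its solutions are totally real over $F$: the values of $h$ generate an algebraic extension $L/F$ in which every $F$-conjugate of every $h(x)$ is real. This is exactly the argument behind Corollary~\ref{dessins}, carried out over $F$ in place of $\Q$. Indeed, Theorem~\ref{B} says the system has exactly $|\Sigma_u|$ solutions over $\C$, and Theorem~\ref{A} produces $|\Sigma_u|$ of them explicitly as the real enharmonic functions attached to the compatible orientations; hence these are \emph{all} of the complex solutions. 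Any automorphism of the splitting field fixing $F$ permutes the solutions of an $F$-rational system, so it carries the tuple $(h(x))$ to another real enharmonic tuple; all conjugates are therefore real, and $L/F$ is totally real.

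With $L$ in hand the rest is bookkeeping. The height of each tile equals a difference $h(x)-h(y)$ of values of $h$ and so lies in $L$; by the BSST dictionary its width is $(\text{area})/(\text{height})=\E_e/(h(x)-h(y))$, which again lies in $L$ since $\E_e\in\Q\subseteq L$ and $L$ is a field. Each horizontal edge of $P$ is a union of tile tops and bottoms lying side by side, so its length is a finite sum of such widths and hence lies in $L$ as well. This places all horizontal edge lengths of $P$ in the totally real extension $L$ of $\Q(v_1,\dots,v_k)$, which is the assertion.

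The step I expect to be the crux is the total-reality claim over the (possibly transcendental) base field $F=\Q(v_1,\dots,v_k)$ rather than over $\Q$. Theorems~\ref{A} and~\ref{B} guarantee that for \emph{any} positive real energies and real boundary values all complex solutions of the enharmonic system are real; what must be checked is that choosing $F$ as the base field, so that Galois conjugation fixes the transcendentals $v_i$, still leaves every conjugate among these real solutions. This follows because the conjugates satisfy the same $F$-rational equations and Theorem~\ref{B} already exhausts the solution count with the real enharmonic functions, so no non-real conjugate can appear; but I would want to phrase this reduction carefully, since it is the only point at which the transcendence of the $v_i$ could in principle intrude.
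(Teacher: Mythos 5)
Your proof is correct and follows essentially the same route as the paper's: convert the tiling to a circular planar network whose boundary values lie in $F=\Q(v_1,\dots,v_k)$ and whose energies are the rational tile areas, identify the $y$-coordinates of horizontal segments with the enharmonic function, use Theorems \ref{A} and \ref{B} to conclude that all complex solutions of the $F$-rational enharmonic system are real (so the values generate a totally real extension of $F$), and express the horizontal lengths as sums of currents $\E_{xy}/(h(x)-h(y))$. The paper's proof is a compressed version of exactly this argument; your elaboration of the crux step --- total reality over the base field $F$ rather than over $\Q$ --- is precisely what the paper leaves implicit.
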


For example the unit-area polygon of Figure \ref{nontileable} cannot be tiled with rectangles of rational areas, because $2^{1/3}$ is not a totally real algebraic number.

\begin{figure}
\center{\includegraphics[width=2.4in]{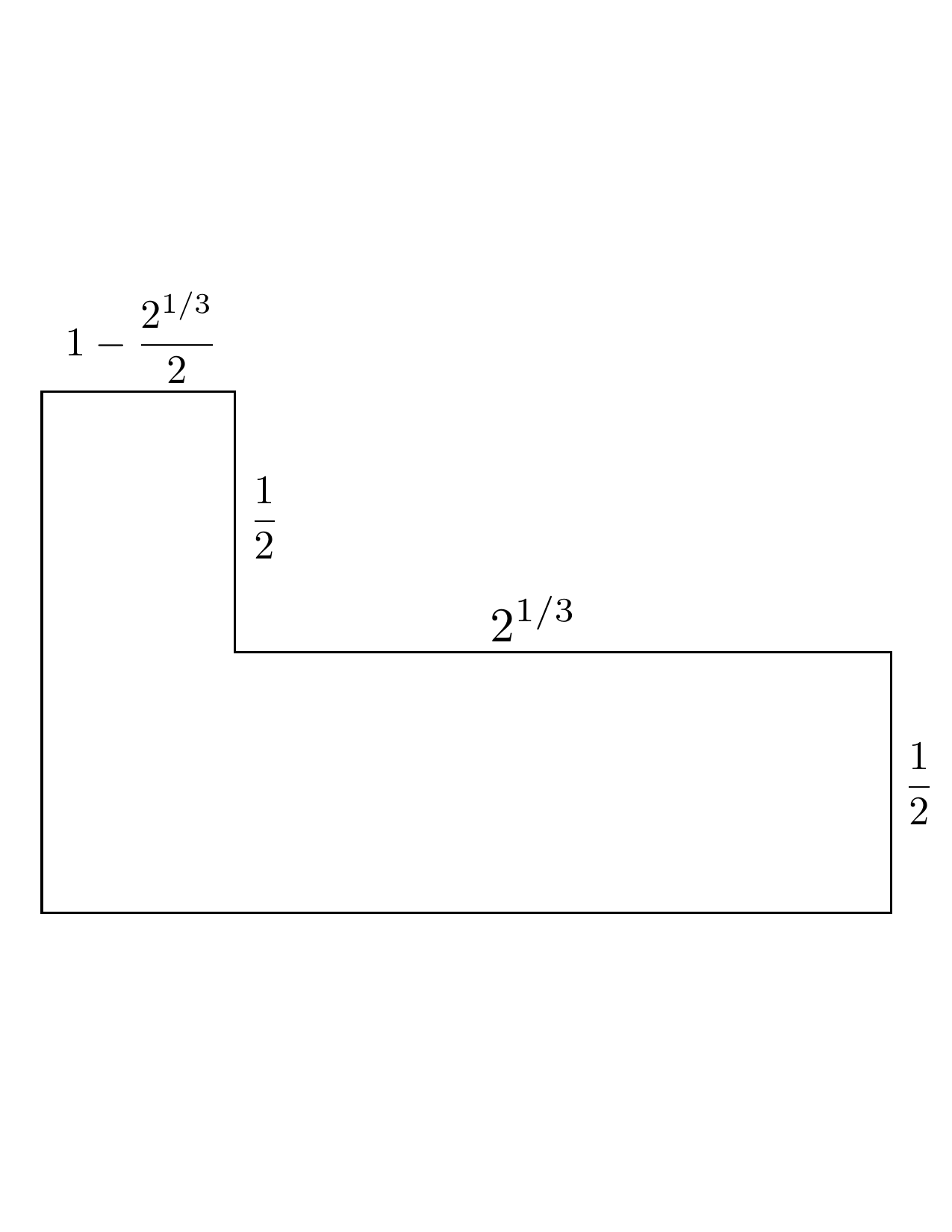}}\vskip-.5in
\caption{\label{nontileable}A polygon with area $1$ which is not tileable with rational-area rectangles.}
\end{figure}

\begin{proof} Assume some vertex of $P$ is at $(0,0)$. As discussed above, the $y$-coordinates 
of the horizontal edges of $P$ correspond to the boundary values $u$ of the enharmonic function. 
The $y$-coordinates of the interior horizontal edges are the interior values of the enharmonic function,
and therefore in a totally real extension of $\Q(v_1,\dots,v_k)$. The $x$-coordinates
of all vertical edges are sums of currents $\E_{xy}/(f(x)-f(y))$, which are therefore in the same extension field.
\end{proof}

\section{Examples}\label{sec:example}

We illustrate our theorems and techniques with a few examples.

\subsection{A small graph}
Consider the graph $\G$ shown in Figure \ref{fig:simple}, with conductances $a,b,c,d,e$ as labeled.
With $u(v_1)=1$ and $u(v_0)=0$, we have $x,y\in[0,1]$, and thus $\F_u\cong[0,1]^2$.  Solving for $h$ gives 
$h(x)=\frac 1{Z} (ab+ac+bc+ae)$ and 
$h(y)=\frac 1{Z}(ab+ac+bc+bd)$, where $Z=ab+ac+ae+bc+bd+cd+ce+de$ is the determinant 
of the Laplacian.  The energies are then 
\be\label{Psi1}
\begin{aligned}
\Psi_u(a,b,c,d,e) = \frac 1{Z^2}\Bigl(&a(bd+cd+ce+de)^2,\ b(ac+cd+ce+de)^2,\\
&c(ae-bd)^2,\ d(ab+ac+bc+ae)^2,\ e(ab+ac+bc+bd)^2 \Bigr)
\end{aligned}
\ee

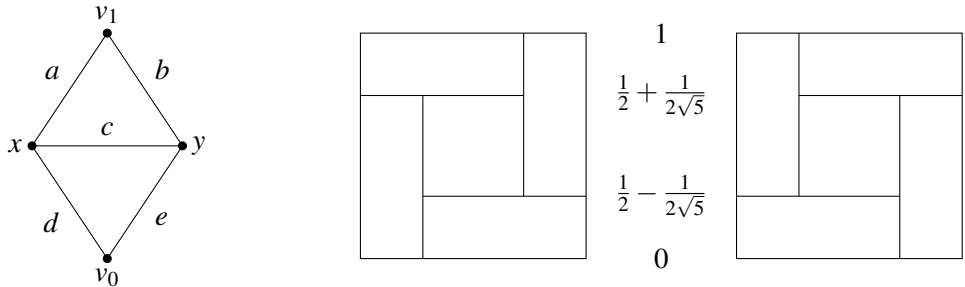
\begin{figure}
\begin{center}
\begin{tikzpicture}
\matrix (m) [row sep = 8em, column sep = 5em]
{
\begin{scope}[scale=.5]
\draw (0,0) -- (2,3) -- (0,6) -- (-2,3) --cycle;
\draw (2,3) to (-2,3);
\draw[fill] (0,0) circle (3pt);
\draw[fill] (0,6) circle (3pt);
\draw[fill] (2,3) circle (3pt);
\draw[fill] (-2,3) circle (3pt);
\draw (0,0) node[anchor=north]{$v_0$};
\draw (0,6) node[anchor=south]{$v_1$};
\draw (2,3) node[anchor=west]{$y$};
\draw (-2,3) node[anchor=east]{$x$};
\draw (-1,4.5) node[anchor=south east]{$a$};
\draw (1,4.5) node[anchor=south west]{$b$};
\draw (0,3) node[anchor=south]{$c$};
\draw (-1,1.5) node[anchor=north east]{$d$};
\draw (1,1.5) node[anchor=north west]{$e$};
\end{scope}
&
\begin{scope}[scale=.5]
\draw (0,0) rectangle (6,6);
\draw (0,4.34) to (4.34,4.34);
\draw (4.34,6) to (4.34,1.66);
\draw (1.66,1.66) to (6,1.66);
\draw (1.66,0) to (1.66,4.34);
	\begin{scope}[xshift=10cm]
	\draw (0,0) rectangle (6,6);
	\draw (1.66,4.34) to (6,4.34);
	\draw (4.34,4.34) to (4.34,0);
	\draw (0,1.66) to (4.34,1.66);
	\draw (1.66,1.66) to (1.66,6);
	\end{scope}
\draw (8,0) node{$0$};
\draw (8,6) node{$1$};
\draw (8,1.66) node{$\frac 12 - \frac 1 {2\sqrt{5}}$};
\draw (8,4.34) node{$\frac 12 + \frac 1 {2\sqrt{5}}$};
\end{scope}
\\};

\end{tikzpicture}
\caption{\label{5tiles}A graph with $|\Sigma|=2$.  With $u(v_0)=0, u(v_1)=1$ and all energies equal, the underlying 
number field is $\Q(\sqrt{5})$.}
\label{fig:simple}
\end{center}
\end{figure}

To find solutions with (for instance) all energies equal to $1$, it is much easier to solve the enharmonic
equation $Lh=0$ than it is to set each coordinate above equal to 1 and solve the system directly.  The enharmonic equation(s)  (see \eqref{eHeqns})
are
\begin{equation*}
\begin{aligned}
0&=\frac {\E_a} {h(x)-h(v_1)} + \frac {\E_c} {h(x)-h(y)} + \frac {\E_d} {h(x)-h(v_0)}=\frac 1 {h(x)-1} + \frac 1 {h(x)-h(y)} + \frac 1 {h(x)}\\
0&=\frac {\E_b} {h(y)-h(v_1)} + \frac {\E_c} {h(y)-h(x)} + \frac {\E_e} {h(y)-h(v_0)}=\frac 1 {h(y)-1} + \frac 1 {h(y)-h(x)} + \frac 1 {h(y)}
\end{aligned}
\end{equation*}
and have the solutions $h(x)=\frac 1 2 \pm \frac {\sqrt{5}}{10}$ and $h(y)=\frac 1 2 \mp \frac {\sqrt{5}}{10}$,
from which one can quickly produce the conductances (for example, $a=e=\frac 12 (15\pm5\sqrt{5})$).
The values of $h$ are roots of the polynomial $5z^2-5z+1$.  There are two solutions because current 
can flow either way along the edge $xy$.  That is, $|\Sigma|=2$.
The two polytopes in $\F_u=[0,1]^2$ on which $M(h)$ is nonzero are the triangles above
and below the diagonal.
 
The equal-area tilings associated to the two solutions are shown in Figure \ref{fig:simple}; the $y$-coordinates of the 
horizontal edges are $h(x)$ and $h(y)$.  The tilings are not isotopic.

\subsection{Quadratic number fields}\label{sec:quadratic}

Keeping the same graph $\G$ as in the previous example, if we instead look for solutions with arbitrary 
energies $\E_a,\E_b,\E_c,\E_d,\E_e$, we find that the conductances
generate the number field $\Q(\sqrt{\delta})$, where 
$$\delta=(\E_a\E_e-\E_b\E_d+\E_cS)^2+4\E_b\E_c\E_dS.$$  
Here $S$ is the sum of the five energies.  
Of course for positive real energies, $\delta>0$ as all roots are real.  It turns out that we can get any real
quadratic number field this way.  To see this, we first specify $\E_a=\E_b=\E_c=1$ so that
the discriminant reduces to $\delta=4\E_d^2+4\E_d\E_e+4\E_e^2+12\E_d+12\E_e+9$.
To obtain the number field $\Q(\sqrt{D})$ for a positive squarefree integer $D$, it suffices to find
positive rational numbers $\E_d,\E_e$ such that $\delta=Dr^2$
for some rational $r$. Choose a rational $s$ so that $D' = D s^2\in(1/3,4/9).$
Then substituting $\E_d = \frac1{6D'-2}, \E_e =\frac{4-9D'}{6D'-2}$ (both positive) gives discriminant
$\delta = \frac{9}{(3D'-1)^2}D'$ which is a rational square times $D$.

\old{
 A change of variables ($u=\E_d+2\E_e+3, v=\E_d+1$) followed by clearing denominators
produces the equation 
\begin{equation}\label{Pell} w^2-Dz^2=-3(x^2-y^2),\end{equation}
which has many integer solutions.  For instance we can set $w=3, z=6, x=y+1=6D-1$.
This gives energies $\E_d=\frac{x-y}y=\frac 1{6D-2}, \E_e=\frac{w-x-2y}{2y}=\frac{4-9D}{6D-2}$
that are rational but not necessarily positive.   
Note $\E_d>0$, but to get $\E_e>0$ we need to solve \eqref{Pell} with $w-x-2y>0$.

We can do this by viewing \eqref{Pell} as a Pell equation in the variables $w$ and $z$, keeping $x$ and $y$
as already defined.  The equation asserts that the norm of the vector $3+6\sqrt{D}$ in the ring $\Z[\sqrt{D}]$
is $-3(x^2-y^2)=9-36D$.  We can obtain other vectors with the same norm by multiplying this vector by any norm
1 unit in this ring.  Since there are norm 1 units of the form $p+q\sqrt{D}$ for arbitrarily large $p$
and $q$, we can replace $w$ and $z$ with arbitrarily large values\footnote{If $D\equiv 1 \mbox{ mod } 4$ then
$p$ and $q$ may be half-integers, but we don't need $w\in\Z$:  a rational $w$ still leads to rational $\E_d,\E_e$.} 
without changing $x$ and $y$.  This will yield positive rational energies $\E_d,\E_e$.

For instance if $D=11$ then we start with the equation 
$3^2-11\cdot 6^2=-3(65^2-64^2)$ and then replace the left side with the norm of the product of $3+6\sqrt{11}$
with the unit $10+3\sqrt{11}$ of $\Z[\sqrt{11}]$, resulting in the desired solution $w=228, z=69, x=65, y=64$.
This translates to $\E_d=1/64, \E_e=35/128$.
}

\subsection{Jacobi polynomials}
One way to generalize the previous graph is the following.\footnote{Note that $n=2$ is the only member of this
family that is planar.}  Let $\G=K_{n+2}$ be the complete graph on $n+2$ vertices $v_0,\dots,v_{n+1}$,
minus the edge $v_0v_1$.
Fix boundary $B=\{v_0,v_1\}$ and boundary values $u(v_0)=0$ and $u(v_1)=1$. Put energy $a$ on all edges 
from $v_0$ to $v_i$, $i\in[2,n+1]$ and energy $b$ on edges from $v_1$ to $v_i,~i\in[2,n+1]$,
and energy $2$ on the remaining edges. Then by symmetry we can order the values $h(i)$ for  $i\in[2,n+1]$ in increasing order.
We have
$$M(h) = \prod_{i\in[2,n+1]} h(i)^a(1-h(i))^b\prod_{1<i<j<n+1}(h(j)-h(i))^2.$$
In \cite[Theorem 6.7.1]{Sz} it is shown that
the values $h(i)$ maximizing $M(h)$ are the roots of the Jacobi orthogonal polynomial $P_{a-1,b-1}(z)$, scaled to the interval
$[0,1]$.
\medskip

\subsection{A medium graph}
For another example, consider the network shown in Figure \ref{graph12}.  
Again let $u(v_1)=1$ and $u(v_0)=0$.  The edges are labeled 1 through 12,
and (by coincidence) also $|\Sigma_u|=12$.  
We have solved the enharmonic equation (see \eqref{eHeqns}) for the 
twelve enharmonic functions that produce all energies equal to $1$.  From these functions we computed the 
conductances of the networks, which determine the shapes of the rectangles in the corresponding Smith
diagrams.  In Figure \ref{example12} we have scaled the Smith diagrams to be (unit) squares, so each 
rectangle in each rectangulation has area $1/12$.
Any rectangulation of a rectangle with the same underlying network (Figure \ref{graph12}) 
is isotopic to one of these twelve. 

These calculations are too complicated to do by hand; for instance, according to Mathematica, the width of the 
rectangle labeled ``1'' in each picture is a root of the polynomial
$$
\begin{aligned}
p(z)=\ &1270080000000 z^{12} - 5554584000000 z^{11} + 10776143400000 z^{10} \\
& -  12235337185000 z^9 + 9034493949125 z^8 - 4560532680000 z^7 \\
& +  1610724560815 z^6 - 400501165895 z^5 + 69535433439 z^4  \\
& - 8223166134 z^3 + 629396649 z^2 - 28041714 z + 551124.
\end{aligned}
$$
All other edge lengths in all tilings in Figure \ref{example12} lie in the splitting field of $p$, which is contained in $\R$.

\begin{figure}[htbp]
\center{\includegraphics[width=1.8in]{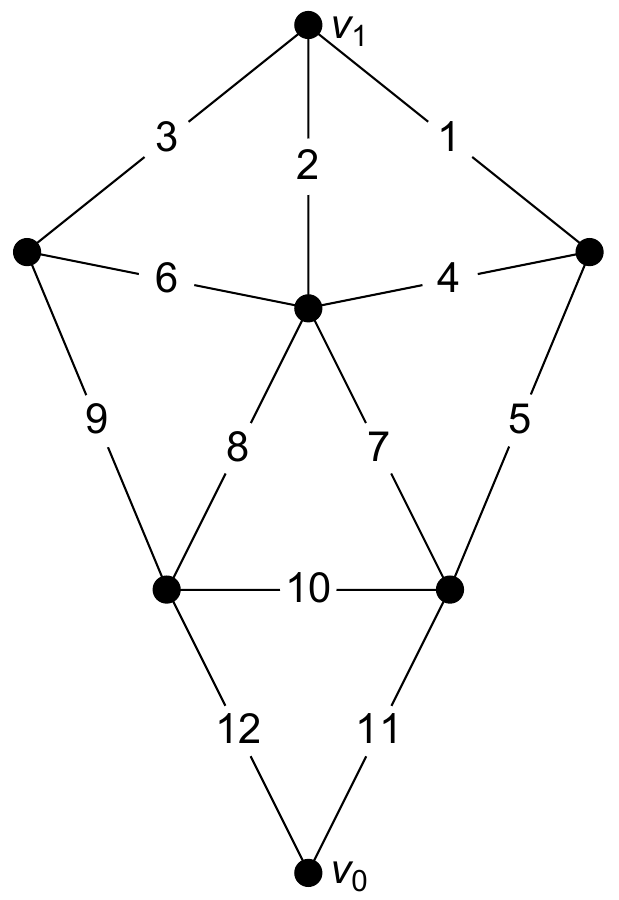}}
\caption{\label{graph12}A planar network with two boundary vertices $v_0,v_1$.}
\end{figure}

\begin{figure}[htbp]
\center{\includegraphics[width=6in]{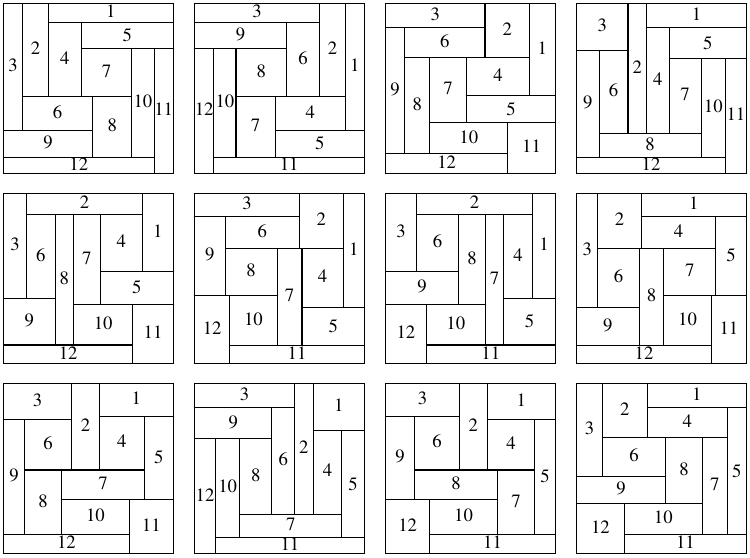}}
\caption{\label{example12}Smith diagrams for the planar network of Figure \protect{\ref{graph12}}, with energies $1$. 
Here $|\Sigma_u|=12$, and the Galois group
acts transitively: the underlying number field for each tiling is a degree-$12$ totally real extension field of $\Q$. Each tiling corresponds to a different bipolar orientation of $\G$, so these tilings are not isotopic.}
\end{figure}

\subsection{A large graph}
The variational method makes it very easy to find (numerically) enharmonic functions on large graphs,
see for example Figure \ref{grid40X40}. In fact this figure illustrates a certain scaling limit convergence theorem, Theorem \ref{scalingthm} below.
\begin{figure}[htbp]
\center{\includegraphics[width=5in]{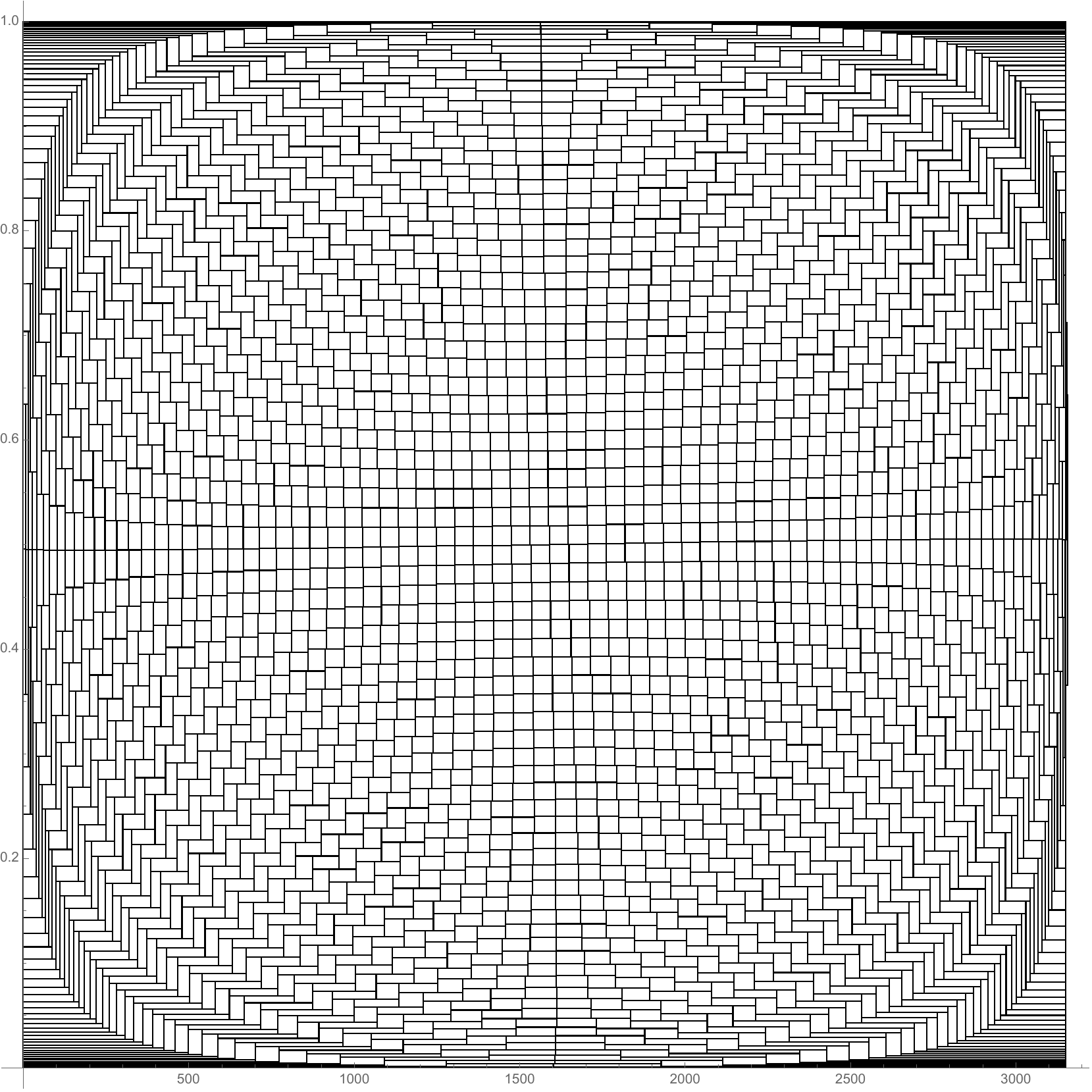}}
\caption{\label{grid40X40}Smith diagram for the $40\times40$ square grid $[0,40]^2\cap \Z^2$, with
edges oriented south and west, with all energies $1$ and boundary values $u(0,0)=0,~u(40,40)=1$. This picture illustrates
Theorem \protect{\ref{scalingthm}} below: the $y$-coordinates of the tops and bottoms of the rectangles, which are indexed by an $n\times n$ grid, 
converge to an analytic function on the square satisfying the enharmonic Laplace equation \eqref{enharmpde}.}
\end{figure}

\section{The Jacobian}\label{sec:jacobian}

The work in this paper began with the observation that the Jacobian of $\Psi$ has a 
surprisingly simple description, which we give here. Our proof of Theorem \ref{A} uses the nonvanishing 
of this Jacobian. 

First some definitions.
On a connected graph $\G=(V,E)$ with boundary vertices $B$,
a \emph{$1$-form} is a function $\omega$ on oriented edges which is antisymmetric under changing orientation: 
$\omega(-e)=-\omega(e)$ where $-e$ represents the edge $e$ with reversed orientation. 
Let $\Gamma^1$ be the space of $1$-forms.
Let  $\Gamma^0$ be the space of functions on $V$ with zero boundary values.

We define the map $d:\Gamma^0\to\Gamma^1$ by $df(e) = f(y)-f(x)$ when $e=\vec{xy}$ is directed from $x$ to $y$. 
A $1$-form is a \emph{coboundary} if it is of the form $df$ for some $f\in\Gamma^0$. 
Let $W_{cob}\subset\Gamma^1$ be the space
of coboundaries. It has a natural basis consisting of the set of coboundaries $\{d\one_v\}_{v\in V\setminus B}$ where 
$\one_v$ is the function which is $1$ at $v$ and zero elsewhere.

The orthogonal complement (for the standard inner product on $\R^E$)
of $W_{cob}$ is the space $W_{cyc}$ of \emph{(relative) cycles}, or \emph{flows}.
These are $1$-forms in the kernel of the boundary operator $d^*:\Gamma^1\to\R^V/\R^B\cong \Gamma^0$, 
the transpose of $d$.
The mapping $d^*$ from $1$-forms to functions is
$$d^*\omega(x) = \sum_{y\sim x} \omega(\vec{xy})$$
where the sum is over vertices $y$ neighboring $x$; in other terminology $d^*\omega$ is the \emph{divergence} of $\omega$. Cycles (or flows)
are $1$-forms which are divergence-free on the non-boundary vertices. 

Note that $W_{cob}$ has dimension $|V|-|B|$, the number of internal vertices, 
and $W_{cyc}$ has dimension $|E|-|V|+|B|$.

The differential of the map $\Psi_u$ has a surprisingly simple form when considered as a map 
from the logarithms of the conductances to the logarithms of the energies.
We assume the boundary values $u$ are fixed. If we fix an element $\sigma\in\Sigma_u$, we can identify 
$\R^E$ with the space of $1$-forms.

Let $P_{cyc}:\Gamma^1\to\Gamma^1$ be the orthogonal projection onto $W_{cyc}$, and $P_{cob} = I-P_{cyc}$ the 
projection onto $W_{cob}$. Given a conductance function $c$, let $h$ be the associated harmonic function, 
and $\omega= c\cdot dh$ the associated current flow. We assume $\omega$ is nonzero on each edge,
which holds for almost all choices of $c$.
Let $Q:\R^E\to\R^E$ be the linear map determined by setting $Q\beta = dh\cdot\beta$ for $\beta\in W_{cob}$ and 
$Q\gamma = c dh\cdot\gamma$ for $\gamma\in W_{cyc}$, where the products are pointwise on edges (that is,
$Q\beta(e) = dh(e)\beta(e)$ and similarly for $Q\gamma$).

\begin{theorem}\label{J=1}
When $\Psi_u(\{c\})$ is nonzero on every edge,
let $J_{\log}$ be the differential of the map $\log\circ\Psi_u\circ\exp:\R^E\to\R^E$. 
Then $J_{\log}=Q^{-1}(P_{cyc}-P_{cob})Q$. 
\end{theorem}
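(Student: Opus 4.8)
The plan is to work entirely in logarithmic coordinates and to read off $J_{\log}$ from the two conservation laws satisfied by the harmonic function. Write $\omega = c\,dh$ for the current $1$-form, so that the defining relations on each edge are $\E_e = c_e\,(dh(e))^2$ and $\omega(e) = c_e\,dh(e)$. Taking logarithms of absolute values edge by edge gives
\be
\log \E_e = \log c_e + 2\log|dh(e)|, \qquad \log|\omega(e)| = \log c_e + \log|dh(e)|.
\ee
Eliminating $\log c_e$ yields the symmetric pair $\log \E_e = \log|\omega(e)| + \log|dh(e)|$ and $\log c_e = \log|\omega(e)| - \log|dh(e)|$. Thus the log-energies and log-conductances are respectively the \emph{sum} and the \emph{difference} of the log-current and the log-gradient, which is the algebraic source of the reflection.

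First I would differentiate the two structural constraints at a base conductance. Harmonicity is Kirchhoff's current law, $d^*\omega = 0$ at interior vertices, and this is preserved under the variation (the boundary values $u$ are held fixed), so the first-order change $\delta\omega$ again lies in $W_{cyc}$. Likewise $dh$ is a coboundary and the induced variation $\delta h$ vanishes on $B$, so $d(\delta h)\in W_{cob}$. Passing to logarithms, $\delta\log|\omega|$ is the image of $\delta\omega\in W_{cyc}$ under the diagonal rescaling of edge $e$ by $1/\omega(e)$, and $\delta\log|dh|$ is the image of $d(\delta h)\in W_{cob}$ under rescaling by $1/dh(e)$; these are exactly the tangent directions, at the current solution, of the log-current and log-gradient families.

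Setting $p := \delta\log|\omega|$ (the cycle part) and $q := \delta\log|dh|$ (the coboundary part) and differentiating the display above gives
\be
\delta\log c = p - q, \qquad \delta\log \E = p + q.
\ee
Hence $J_{\log}$ is precisely the linear involution that fixes $p$ and negates $q$: its $+1$-eigenspace is the space of cycle directions and its $-1$-eigenspace is the space of coboundary directions. Under the identification of $\R^E$ with $\Gamma^1$ determined by $\sigma$, these complementary spaces become $W_{cyc}$ and $W_{cob}$, the involution becomes $P_{cyc}-P_{cob}$, and the theorem follows. Note this also shows $J_{\log}^2 = I$ and that the eigenvalue multiplicities are $\dim W_{cyc}$ and $\dim W_{cob}$, consistent with the stated formula.

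The step I expect to be the main obstacle is the final identification: one must verify that, under the chosen $\sigma$-identification, the rescaled cycle directions $p$ and coboundary directions $q$ are genuine orthogonal complements and coincide with $W_{cyc}$ and $W_{cob}$, so that $p = P_{cyc}(\delta\log c)$ and $-q = P_{cob}(\delta\log c)$. This is what upgrades the abstract involution of the previous paragraph into the orthogonal reflection $P_{cyc}-P_{cob}$, and it is where the weighting built into the identification (rather than the naive edge-by-edge one) is essential, since in raw logarithmic coordinates the two tangent spaces are the rescaled images of $W_{cyc}$ and $W_{cob}$ and are orthogonal only in the energy-weighted inner product. I would isolate this as a separate lemma: that the rescaled cycle and coboundary spaces form orthogonal complements and that the $\sigma$-identification carries the energy-weighted structure to the standard one on $\Gamma^1$.
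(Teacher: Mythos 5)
Your first three paragraphs are correct, and they are in substance the paper's own proof: the paper realizes exactly your two families of directions by one-parameter deformations (fix the current $\omega$ and move $h$ at a single interior vertex; fix $h$ and add a small multiple of a cycle to $\omega$), using the same identities $\log\E_e=\log|\omega(e)|+\log|dh(e)|$ and $\log c_e=\log|\omega(e)|-\log|dh(e)|$ that you start from.

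The problem is the step you flag at the end, and it is worse than a ``main obstacle'': it cannot be carried out, because the literal equality $J_{\log}=P_{cyc}-P_{cob}$ under the paper's $\sigma$-identification (a pure sign convention) is false. As your computation correctly shows, the $(+1)$- and $(-1)$-eigenspaces of $J_{\log}$ are the \emph{rescaled} spaces $D_\omega^{-1}W_{cyc}$ and $D_{dh}^{-1}W_{cob}$, where $D_\omega, D_{dh}$ are the diagonal matrices with entries $\omega(e), dh(e)$. The paper's proof glosses over exactly this point: it asserts $\partial_t\log dh_t(e)|_{t=0}=\frac{h_1(v)-h(v)}{h(v)}\,d\one_v$, whereas the correct value is $(h_1(v)-h(v))\,d\one_v(e)/dh(e)$, with an $e$-dependent denominator that cannot be absorbed into a scalar; the cycle computation has the same flaw. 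So your more careful bookkeeping has exposed an error in the statement itself. Concretely, take the path $v_0-x-v_1$ with edge conductances $a,b$ and $u(v_0)=0$, $u(v_1)=1$; then $h(x)=b/(a+b)$, and in log coordinates (rows indexed by energies, columns by conductances)
\begin{equation*}
J_{\log}=\frac{1}{a+b}\begin{pmatrix}b-a&2a\\ 2b&a-b\end{pmatrix},
\qquad
P_{cyc}-P_{cob}=\begin{pmatrix}0&1\\ 1&0\end{pmatrix},
\end{equation*}
which differ whenever $a\ne b$. Moreover no edge-wise (diagonal) re-identification can repair this: in this example $D_\omega$ is scalar, so any diagonal map fixing $W_{cyc}$ is scalar, and no scalar map carries $D_{dh}^{-1}W_{cob}=\mathrm{span}(a,-b)$ back to $W_{cob}=\mathrm{span}(1,-1)$. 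Hence your proposed closing lemma---that the $\sigma$-identification carries the energy-weighted structure to the standard one---is unprovable.

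What your argument does prove is the correct statement, and it is all the paper ever uses downstream: $J_{\log}$ is an involution whose $(+1)$- and $(-1)$-eigenspaces are $D_\omega^{-1}W_{cyc}$ and $D_{dh}^{-1}W_{cob}$; by your orthogonality remark (using $\E_e=\omega(e)\,dh(e)$ and $W_{cyc}\perp W_{cob}$) these are orthogonal complements for the energy inner product $\langle\xi,\eta\rangle_\E=\sum_e\E_e\,\xi(e)\eta(e)$, so $J_{\log}$ is self-adjoint for that inner product and conjugate to $P_{cyc}-P_{cob}$, with eigenvalue multiplicities $\dim W_{cyc}$ and $\dim W_{cob}$. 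In particular $\det J_{\log}=(-1)^{|V|-|B|}$, which is exactly what Corollary \ref{J=1c} (and through it the proof of Theorem \ref{A}) requires. So keep your computation, delete the final identification step, and state the conclusion as this eigenspace decomposition (equivalently, conjugacy to $P_{cyc}-P_{cob}$) rather than equality.
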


\begin{cor}\label{J=1c}
$$\det D\Psi_u =(-1)^{|V|-|B|}\prod_e (h(x)-h(y))^2.$$
In particular on $(0,\infty)^E$, $\Psi_u$ fails to be locally injective precisely when some edge has energy $0$.
\end{cor}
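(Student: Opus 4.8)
The plan is to deduce the determinant formula directly from Theorem~\ref{J=1} by undoing the logarithmic change of coordinates, compute $\det J_{\log}$ from the projection description, and then extend the result across the locus where some energy vanishes by a density argument.

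First I would relate $D\Psi_u$ to $J_{\log}$ via the chain rule. Writing $c=(c_e)$ for the conductances and $\E=(\E_e)=\Psi_u(c)$ for the energies, the map whose differential is $J_{\log}$ is $\log\circ\Psi_u\circ\exp$. Since $D\exp$ at $\log c$ is $\mathrm{diag}(c_e)$ and $D\log$ at $\Psi_u(c)=\E$ is $\mathrm{diag}(\E_e)^{-1}$ (both meaningful precisely when every $\E_e\neq 0$, as assumed in Theorem~\ref{J=1}), the chain rule gives
\be J_{\log}=\mathrm{diag}(\E_e)^{-1}\,D\Psi_u\,\mathrm{diag}(c_e).\ee
Solving for $D\Psi_u$ and taking determinants yields
\be \det D\Psi_u=\Big(\prod_e\tfrac{\E_e}{c_e}\Big)\,\det J_{\log}.\ee

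Next I would compute $\det J_{\log}$. Because $P_{cyc}$ and $P_{cob}=I-P_{cyc}$ are complementary orthogonal projections, the operator $J_{\log}=P_{cyc}-P_{cob}=2P_{cyc}-I$ acts as $+1$ on $W_{cyc}$ and as $-1$ on $W_{cob}$, so its determinant is $(-1)^{\dim W_{cob}}=(-1)^{|V|-|B|}$. Substituting the identity $\E_e/c_e=(h(x)-h(y))^2$, immediate from $\E_e=c_e(h(x)-h(y))^2$, then gives
\be \det D\Psi_u=(-1)^{|V|-|B|}\prod_{e=xy}(h(x)-h(y))^2,\ee
which is the asserted formula on the open set where all energies are nonzero.

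The main point requiring care is that Theorem~\ref{J=1} is stated only when every energy is nonzero, whereas the corollary asserts the formula (and the injectivity statement) on all of $(0,\infty)^E$. To remove this restriction I would observe that $\Psi_u$ is a rational map whose denominator is a power of the Laplacian determinant, which is strictly positive on $(0,\infty)^E$ by the weighted matrix-tree theorem; hence $\det D\Psi_u$ and the harmonic function $h$, and therefore $\prod_e(h(x)-h(y))^2$, are rational functions of $c$ that are regular on $(0,\infty)^E$. The two sides agree on the dense open subset where all energies are positive, so they agree on all of $(0,\infty)^E$ by continuity. Finally, on $(0,\infty)^E$ each factor $(h(x)-h(y))^2\geq 0$, and the product vanishes exactly when $h(x)=h(y)$ on some edge, i.e.\ exactly when the energy $\E_e=c_e(h(x)-h(y))^2$ on that edge is $0$. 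By the inverse function theorem $\Psi_u$ is locally injective wherever $\det D\Psi_u\neq 0$, giving the stated characterization of the failure of local injectivity.
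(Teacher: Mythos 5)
Your proof is correct and follows essentially the same route as the paper: apply Theorem~\ref{J=1} together with the chain rule to get $\det D\Psi = (-1)^{\dim W_{cob}}\prod_e \E_e/\prod_e c_e$, then substitute $\E_e = c_e(h(x)-h(y))^2$. The only difference is that you additionally spell out the continuity argument extending the formula across the locus where some energy vanishes, a point the paper's terse proof leaves implicit.
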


\begin{proof}[Proof of Corollary \ref{J=1c}] 
By the Theorem (and the chain rule)
$$\det D\Psi_u = (-1)^{\text{dim}(W_{cob})}\frac{\prod_e\E_e}{\prod_e c_e}.$$ Substitute $\E_e=c_e(h(x)-h(y))^2$ for edges $e=xy$.
\end{proof}

\begin{proof}[Proof of Theorem \ref{J=1}] 
Let $c, h$ and $\omega$ be as above.
Fix $v\in V\setminus B$. Let $f\in\Gamma^0$ and for $t\in\R$, let $h_t = h + tf$.
Let $\eps>0$ be small enough so that for $t\in(-\eps,\eps)$ the sign of $dh_t$ is constant.
We fix the flow $\omega$ to its initial value, and define the conductance at time $t$ by $c_e(t) = \omega(e)/dh_t(e)$.
Then $h_t$ is harmonic for conductances $c_e(t)$. 
The associated energies are $\E_e(t) = \omega(e) dh_t(e)$ and
thus 
\be\label{Ec}\frac{\partial}{\partial t}\log \E_e(t) = -\frac{\partial}{\partial t}\log c_e(t).\ee

Evaluating at $t=0$, we have
$$\left.-\frac{\partial}{\partial t}\log c_e(t)\right|_{t=0}=\left.\frac{\partial}{\partial t}\log dh_t(e)\right|_{t=0} = \frac{df(e)}{dh(e)}.$$
This quantity $\frac{\partial}{\partial t}\log c_e(t)$ is thus an element of $Q^{-1}W_{cob}$.
Equation (\ref{Ec}) shows that it is in fact an eigenvector of $J_{\log}$ with eigenvalue $-1$.

Now let $\gamma\in W_{cyc}$. We take a different perturbation of the circuit (recycling the notation $t,\eps$).
 Let $\eps>0$ be small enough so that 
$\omega_t := \omega+t\gamma$ has constant signs for all $t\in(-\eps,\eps)$.
We now fix $h$, and define the conductance $c_e(t)$ by $c_e(t) = \omega_t(e)/dh(e)$.
We then have 
\be\label{Ec2}\frac{\partial}{\partial t}\log \E_e(t) = \frac{\partial}{\partial t}\log c_e(t).\ee
Evaluating at $t=0$, this vector is a multiple of $\gamma$:
$$\left.\frac{\partial}{\partial t}\log c_e(t)\right|_{t=0} = \left.\frac{\partial}{\partial t}\log\omega_t(e)\right|_{t=0} = \frac{\gamma(e)}{\omega(e)}.$$
Thus $\frac{\partial}{\partial t}\log c_e(t)$ is an element of $Q^{-1}W_{cyc}$,
and is fixed by $J_{\log}$.

Combining these two results shows that $J_{\log}$ has the desired form.
\end{proof}

It is worth pointing out the interpretation of this proof in terms of tilings.  If $G$ is planar, the proof 
starts with a rectangle tiling with combinatorial structure described by $G$, and then studies
the effect of isotopy on the diagram.  Moving a horizontal segment corresponds to changing the 
value of $h$ at a single point, which is an eigenvector of $J_{\log}$ of eigenvalue $-1$:
the conductances (widths over heights) change in the opposite direction as the areas (widths times heights).  
Moving a vertical segment corresponds to the eigenvalue $+1$: the conductances change in the same direction 
as the areas.

\section{Proofs of Theorems \ref{A} and \ref{B}}\label{sec:AB}

We assume $u$ is fixed and takes distinct values at distinct boundary vertices, and we omit it from the notation.

\begin{proof}[Proof of Theorem \ref{A}]
We first prove that the map is surjective onto $\Sigma$, that is, every $\sigma\in\Sigma$ is realized by some choice of conductances.
Given $\sigma\in\Sigma$, let $f$ be any function in $\F(\sigma).$
Such a function exists because $\sigma$ is acyclic: for example one can define $f$ from a linear extension of $\sigma$, thinking of $\sigma$ as a partial order.
There is a choice of conductances for which this $f$ is harmonic: to see this
take any flow $\omega$ with orientation $\sigma$ and divergence $0$ on $V\setminus B$, and define the conductance
$c_e$ on edge $e=xy$ by $c_e = \frac{\omega(e)}{f(x)-f(y)}$. As a concrete example of such a flow $\omega$, let $P$
be the set of all possible oriented paths from a boundary vertex to a boundary vertex, with orientation compatible with $\sigma$, and let the value 
$\omega(e)$ be the probability that $e$ is on a uniformly chosen path in $P$. By our condition on $\G$ that each edge
lies on a simple path from $B$ to $B$, we see that $\omega$ is nonzero.
This completes the proof that the map is surjective onto $\Sigma$.

Now consider the map $\Psi\colon(0,\infty)^E\to[0,\infty)^E$ sending a tuple of conductances to the corresponding
tuple of energies of the associated harmonic function. Corollary \ref{J=1c} shows that the Jacobian of $\Psi$
is nonzero when the energies are nonzero. 

We claim that $\Psi$ is proper: the preimage of a compact set is compact. In other words, $\Psi$ maps the boundary to the boundary.
Take a sequence of conductance functions leaving every compact set of $(0,\infty)^E$. By taking a subsequence, 
we can assume it converges to a boundary
point of $[0,\infty]^E$. Let $h$ be the associated harmonic function, or potential.
We need to show that along this sequence some edge has energy tending to $0$ or $\infty$.

Let $S_0$ be the set of edges whose conductances are going to zero, and $S_\infty$ those whose conductances are
going to $\infty$. 
Suppose $S_\infty$ does not connect any two boundary vertices. We claim then that $h$
tends to a constant on each component of $S_\infty$. If the potential drop $dh$ across some edge of $S_\infty$
is bounded away from zero, then the Dirichlet energy diverges as this conductance goes to $\infty$.
On the other hand if $h$ is taken to be constant on every component of $S_\infty$ then the Dirichlet energy 
of this $h$ is bounded as the conductances go to $\infty$.
Since the limiting potential minimizes Dirichlet energy, it must be the case that the potential drops $dh$ across edges in
$S_\infty$ tend to zero,  proving the claim.
Likewise the energies on edges in $S_\infty$ tend to zero: suppose some edge of $S_\infty$ 
has energy bounded away from zero. 
By adjusting all values of $h$ by $\eps$ to make them constant on components of $S_\infty$,
we change the energy of edges outside of $S_\infty$ by $O(\eps)$ but decrease the energy on 
$S_\infty$ to be exactly zero.  This is a net decrease in energy.
In particular there is a limiting network obtained by contracting all edges in $S_\infty$.
The resulting contracted network has conductances bounded above, which implies a finite current
flow. The energies on the contracted edges and any edges of zero conductance have gone to zero.

If $S_\infty$ contains a connection between two boundary points (with different values of $u$, by genericity), the current flowing
between these points will diverge. This implies that the energy (which can be written as current flow times $dh$) 
will diverge along some edges of this connection.

Thus $\Psi$ is proper. 

The preimage of $(0,\infty)^E$ in the space of conductances 
is a union of open sets $U_\sigma$ where $dh$ has constant 
sign $\sigma$. At a finite boundary point of any such $U_\sigma$, some energy goes to zero (since zero current implies zero energy).
Hence the boundary of $U_\sigma$ maps into the boundary of the space of energies.
Thus $\Psi$ on any of these open sets $U_\sigma$ is proper and a local homeomorphism (a covering map), and therefore surjective from 
this set to the space of energies.
To get uniqueness it remains to show that $U_\sigma$ is connected.

In fact we show that $U_\sigma$ has the structure of (the interior of) a polytope.
As in the first paragraph of the proof, a point in $U_\sigma$ is determined by
a flow compatible with $\sigma$ (and of divergence zero on interior vertices), and an independent choice of
function $f$ in the open polytope $\F_u(\sigma)$.
The set of current flows $\omega$ compatible with $\sigma$ is the subset of $(0,\infty)^E$ 
defined by the linear equations $\text{div}(\omega)=0$. It is the interior of a convex polytope (possibly
infinite but with finitely many facets).
\end{proof}


\begin{proof}[Proof of Theorem \ref{B}]
We have constructed $|\Sigma|$ real preimages of each point in $(0,\infty)^E$. It remains to show that there are no non-real preimages.
Let $\{c_e\}\in\Psi^{-1}(\{\E_e\})$ be a preimage of a set of positive real energies. Let $h$ be the corresponding solution to the Dirichlet problem with 
conductances $\{c_e\}$. Then for all non-boundary vertices $x$,
$$\sum_{y\sim x}c_e(h(x)-h(y)) = 0$$ and so
$$ \sum_{y\sim x} \frac{\E_{xy}}{h(x)-h(y)}=0.$$ 
We can rewrite this as
$$ \sum_{y\sim x} \frac{\E_{xy}(\overline{h(x)}-\overline{h(y)})}{|h(x)-h(y)|^2}=0,$$
or
$$ \left(\sum_{y\sim x} \frac{\E_{xy}}{|h(x)-h(y)|^2}\right)\overline{h(x)}=\sum_{y\sim x} \frac{\E_{xy}}{|h(x)-h(y)|^2}\overline{h(y)}.$$
This represents $\overline{h(x)}$ as a convex combination of its neighboring values $\overline{h(y)}$, and thus 
$h(x)$ lies in the convex hull of the neighboring values $h(y)$. 
Since the boundary values of $h$ are real, each $h(x)$ must be real as well.
Finally $c_e = \E_e/(h(x)-h(y))^2$ so each $c_e$ is real and positive.
\end{proof}

\section{Enharmonic functions: a variational method}\label{relax}

\subsection{Proof of Theorem \protect{\ref{maxima}}}
\begin{proof}[Proof of Theorem \protect{\ref{maxima}}]
Fix a compatible orientation $\sigma$. Recall the open polytope $\F_u(\sigma)\subset\F_u$ of functions
inducing orientation $\sigma$. Restricted to $\F_u(\sigma)$, 
the function 
$\log M(h)$ is concave, since it is a sum of concave functions 
$$\E(e)\log|h(x)-h(y)| = \E(e)\log(\sigma(e)(h(x)-h(y))).$$
The function $\log M(h)$ tends to $-\infty$ on the boundary of $\F_u(\sigma)$ (when $h(x)=h(y)$ for some edge $e=xy$). 
As a consequence $\log M(h)$, being analytic, is strictly concave on $\F_u(\sigma)$.
It thus has a unique maximizer on the interior of $\F_u(\sigma)$, for each $\sigma$.
The maximizer is determined by setting the derivative with respect to $h(x)$ equal to zero for each vertex $x$, that is
\be\label{EH}\sum_{y\sim x} \frac{\E(e)}{h(x)-h(y)}=0.\ee

Given a solution to \eqref{EH}, set $c_e = \E_e/(h(x)-h(y))^2$, then \eqref{EH} becomes
\be\label{HE}\sum_{y\sim x} c_e(h(x)-h(y))=0,\ee
that is, $h$ is harmonic for the conductances $c_e$ and has energies $\E_e$. 

\end{proof}

\subsection{Scaling limits}
Let $D\subset\R^2$ be a Jordan domain with piecewise smooth boundary and
let $u:\partial D\to\R$ be continuous. We assume 
$u$ is \emph{feasible} in the sense that the space $\Omega(D,u)$ of smooth functions $f\colon D\to\R$
with boundary values $f|_{\partial U}=u$ and satisfying $f_x,f_y>0$ is nonempty\footnote{The issue of feasibility is not completely
straightforward. Such a function $f$ necessarily has level contours having tangents pointing in the NW and SE quadrants. Conversely,
given a foliation $\mathcal F$ of $D$ by SE-directed leaves, in which the value of $u$ on the endpoints of each leaf are the same, there is a function $f$ whose level lines are the leaves of $\mathcal F$.}.
Fix such a function $f_0\in \Omega(D,u)$.

For sufficiently small $\eps>0$ let $D_\eps=\eps\Z^2\cap D$
be the part of the graph $\eps\Z^2$ contained in $D$. 
Let $\sigma$ be the south and west orientation of the edges of $D_\eps$.
Let  $\partial D_\eps$ be the set of boundary vertices of $D_\eps$, that is,
vertices with a nearest neighbor in $\eps\Z^2$ not in $D_\eps$. Let $u_\eps:\partial D_\eps\to\R$ 
be the restriction of $f_0$ to $\partial D_\eps$.

Let $f_\eps$ be the unique enharmonic function on $D_\eps$ with energy $\eps^2$ per edge, 
with boundary values $u_\eps$ and with $df_\eps$ having orientation $\sigma$.

\begin{theorem}\label{scalingthm} 
Under the above hypotheses, as $\eps\to0$ the function $f_\eps$ converges to a continuous function on $D$,
real analytic in the interior of $D$, and satisfying the enharmonic Laplace equation
\be\label{enharmpde}\frac{f_{xx}}{f_x^2} + \frac{f_{yy}}{f_y^2}=0.\ee
There is a unique (up to an additive constant)
conjugate function $g$, also real analytic and continuous up to the boundary,
satisfying
(\ref{CR}).
\end{theorem}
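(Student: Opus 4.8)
The plan is to exploit the variational characterization of Theorem~\ref{maxima}. With energy $\eps^2$ on every edge, $f_\eps$ is the maximizer over the polytope $\F_{u_\eps}(\sigma)$ of the functional \eqref{Mfuncl} with $\E_e\equiv\eps^2$, that is of
$$\log M_\eps(h)=\eps^2\sum_{e=xy}\log|h(x)-h(y)|.$$
For a smooth competitor $f\in\Omega(D,u)$ the increment across a horizontal edge is $\eps f_x+O(\eps^2)$ and across a vertical edge $\eps f_y+O(\eps^2)$, so that $\log|h(x)-h(y)|=\log\eps+\log f_x+O(\eps)$ (resp.\ with $f_y$). After discarding the $h$-independent constant of order $\eps^2(\#E)\log\eps$, the functional is a Riemann sum for
$$\mathcal M(f)=\int_D\bigl(\log f_x+\log f_y\bigr)\,dA.$$
First I would make this precise as a $\Gamma$-convergence statement on the admissible class of functions with boundary values $u$ and $f_x,f_y>0$. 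The Euler--Lagrange equation of $\mathcal M$, whose Lagrangian depends only on $\nabla f$, is $\partial_x(1/f_x)+\partial_y(1/f_y)=0$, which is exactly \eqref{enharmpde}; and since the integrand $\log f_x+\log f_y$ is strictly concave in $(f_x,f_y)$, the maximizer of $\mathcal M$ is unique.

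The second ingredient is compactness. Every admissible function is monotone increasing in both coordinates and has the prescribed boundary values, hence the family is uniformly bounded; feeding the fixed feasible $f_0$ in as a competitor bounds $\log M_\eps(f_\eps)$ from below, and $f_0$ also serves as a barrier enforcing the boundary values. From this I would extract a subsequential uniform limit $f$ of $f_\eps$ and conclude, via the $\Gamma$-convergence together with the energy bound, that $f$ maximizes $\mathcal M$ and therefore solves \eqref{enharmpde}. Uniqueness of the maximizer then upgrades subsequential convergence to convergence of the entire family $f_\eps\to f$.

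For regularity I would observe that \eqref{enharmpde}, written as $f_y^2 f_{xx}+f_x^2 f_{yy}=0$, is quasilinear and, on any interior region where $f_x,f_y$ are pinned between positive constants, uniformly elliptic with coefficients depending analytically on $\nabla f$; standard elliptic regularity followed by the classical analyticity theorem for solutions of elliptic equations with analytic coefficients then gives real analyticity of $f$ in the interior of $D$. The conjugate function is essentially automatic: the $1$-form $\frac{1}{f_y}\,dx-\frac{1}{f_x}\,dy$ is closed precisely because $f$ satisfies \eqref{enharmpde} (this is the integrability condition $g_{xy}=g_{yx}$), so on the simply connected domain $D$ it integrates to a function $g$, unique up to an additive constant, with $g_x=1/f_y$ and $g_y=-1/f_x$ --- that is, the nonlinear Cauchy--Riemann equations \eqref{CR}. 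Real analyticity and continuity of $g$ up to $\partial D$ follow from those of $f$ and the gradient bounds; one may also identify this $g$ with the scaling limit of the discrete enharmonic conjugates of Section~\ref{sec:analytic}.

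The hard part will be the a priori gradient estimates: proving that on interior compacta the discrete increments $df_\eps$ stay bounded above and below by positive constants uniform in $\eps$. The variational problem is singular --- the integrand $\log f_x+\log f_y$ diverges to $-\infty$ as $f_x$ or $f_y\to0$, and \eqref{enharmpde} degenerates exactly where monotonicity ceases to be strict. Establishing uniform non-degeneracy in the interior (which both drives the elliptic regularity and licenses passing to the Euler--Lagrange equation in the limit), and controlling the possibly genuine degeneration near $\partial D$ so as to recover continuity up to the boundary, is the technical heart of the argument.
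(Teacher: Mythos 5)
Your continuum setup is the paper's: the functional $\M(f)=\int_D\log(f_xf_y)\,dx\,dy$, its Euler--Lagrange equation being \eqref{enharmpde}, uniqueness of the maximizer from strict concavity, and the conjugate $g$ obtained by integrating the closed $1$-form $\frac{dx}{f_y}-\frac{dy}{f_x}$ over the simply connected domain (identical to the paper, including the sign conventions matching \eqref{CR}); your explicit removal of the $h$-independent constant $\eps^2(\#E)\log\eps$ is actually more careful than the paper's writeup. The genuine difference, and the gap, lies in how you identify the limit and get its regularity. You propose: compactness gives a subsequential limit $f$, $\Gamma$-convergence shows $f$ maximizes $\M$, and then you pass to the Euler--Lagrange equation and run elliptic regularity --- for which you correctly note you need uniform interior nondegeneracy of the discrete increments ($c\,\eps\le df_\eps\le C\,\eps$ on compacta). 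You flag this as ``the technical heart'' and do not prove it. As written, that is a real gap: nothing in your argument supplies those bounds, and without them your ellipticity and your passage to the E--L equation both fail.

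The paper's proof is organized so that this estimate is never needed, which is worth internalizing. It first constructs the limit $F$ purely in the continuum as the maximizer of $\M$ over monotone functions with boundary values $u$: existence from compactness of that class plus upper semicontinuity of $\M$, uniqueness from strict concavity, and interior real-analyticity from Morrey's regularity theory for such variational problems \cite{Morrey} --- no discrete input whatsoever. Convergence $f_\eps\to F$ is then a sandwich of functional values only: optimality of $f_\eps$ for $\log M_\eps$, optimality of $F$ for $\M$, and the two Riemann-sum statements $\log M_\eps(f_\eps)=\M(\tilde f_\eps)+o(1)$ (an essentially exact identity for the piecewise-linear extension $\tilde f_\eps$, valid with no smoothness assumption on $f_\eps$) and $\log M_\eps(F)=\M(F)+o(1)$ (using analyticity of $F$), which force $\M(\tilde f_\eps)\to\M(F)$; upper semicontinuity and strict concavity then give $f_\eps\to F$. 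In this ordering the discrete maximizers never have to satisfy any PDE-type estimate, the enharmonic equation is invoked only for the continuum object $F$, and the analyticity you wanted from uniform ellipticity comes instead from continuum variational regularity. If you reorganize your argument this way --- define $F$ first, then sandwich, using $F$ itself (not $f_0$) as the competitor in the lower bound for $\log M_\eps(f_\eps)$ --- your $\Gamma$-convergence machinery collapses to the two Riemann-sum estimates and your unresolved gradient bounds become unnecessary.
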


We call $f$ satisfying \eqref{enharmpde} an \emph{enharmonic function}.
A function $g$ satisying (\ref{CR}) is an \emph{enharmonic conjugate}.


\begin{proof} Assuming $f\in C^2$ satisfies \eqref{enharmpde},
the $1$-form $\omega=-\frac{dx}{f_y}+\frac{dy}{f_x}$ is closed. 
Let $g(p)=\int^{p}_{p_0}\omega$ for some fixed $p_0\in D$.  
Then $f$ and $g$ satisfy the ``fixed-energy" Cauchy Riemann equations (\ref{CR}).
The enharmonicity of $g$ follows from (\ref{CR}) as well. 

Let $F\in\Omega(D,u)$ be the (unique) function maximizing the concave functional 
\be \M(F)=\int_D\log(F_xF_y)dx\,dy. \ee
Existence and uniqueness of $F$ both follow from strict concavity of $\cal M$ (existence follows from the fact that the space of monotone
functions with boundary values $u$ is compact, and $\M$ is upper semicontinuous; uniqueness follows from the fact that
${\cal M}(\frac12(F_1+F_2))>{\cal M}(F_1)+{\cal M}(F_2)$ unless $F_1=F_2$.
Analyticity follows from analyticity of the function $\log$, see \cite{Morrey}[Theorem 1.10.4 (vi)].

We will show that $f_\eps \to F$.

The Euler-Lagrange equation for a critical point of ${\cal M}(F)$ is the enharmonic equation
$$\frac{F_{xx}}{F_x^2}+\frac{F_{yy}}{F_y^2}=0.$$

Let $M_\eps$ be the functional (\ref{Mfuncl}) for the graph $D_\eps$ with energies $\eps^2$. Let $f_\eps$ be its 
maximizer.  After triangulating the faces of $D_\eps$ by adding diagonals, we can extend $f_\eps$ to a piecewise 
linear function $\tilde f_\eps$ on $D$ (except near the boundary). 
Then $\log M_\eps(f_\eps)$ is a Riemann sum for $\M(\tilde f_\eps)$, so
$$\log M_\eps(f_\eps) =\M(\tilde f_\eps) + o(1).$$

If we restrict $F$ to $D_\eps$ we likewise find by analyticity of $F$ that
$$\log M_\eps(F) = \M(F) + o(1).$$
Consequently
$$\M(F) = \log M_\eps(F)+o(1) \le \log M_\eps(f_\eps)+o(1) = \M(\tilde f_\eps)+o(1) \le \M(F) + o(1).$$
Therefore all these quantities are within $o(1)$ of each other.
By upper semicontinuity of $\M$, we find that $f_\eps$ converges to $F$.

We prove the second statement of the theorem in the coming section.

\subsection{Riemann mapping to a rectangle}\label{RiemannMap}

Let $D\subset\R^2$ be a piecewise smooth domain of the following type. There are four points $a,b,c,d$ on the boundary of $D$ in counter-clockwise order,
so that the boundary of $D$ between $a$ and $b$ is monotone in the NE direction (that is, is monotone in 
the $+x,+y$ directions), 
and similarly between $b$ and $c$, $c$ and $d$, $d$ and $a$ respectively the boundary is monotone in the NW, SW, and SE directions.
Let $u$ be the function which is $1$ on $\partial D$ between $b$ and $c$, and zero between $d$ and $a$. 
Let $D_\eps$ be as before the subgraph of $\eps\Z^2$ with vertices in $D$. Let $a_\eps,\dots,d_\eps$ be boundary vertices
so that the boundary of $D_\eps$ is monotone NE, NW, SW, SE between $a_\eps$ and $b_\eps$, $b_\eps$ and $c_\eps$, $c_\eps$ and $d_\eps$, and $d_\eps$ and $a_\eps$.
Let $D'_\eps$ be the dual graph of $D_\eps$, with a vertex for every bounded face of $D_\eps$ and, instead of one
exterior vertex, four exterior
vertices $v_{ab},v_{bc},v_{cd},v_{da}$ outside of the corresponding boundary edges of $D_\eps$. 
Let $\sigma$ be the south/west orientation of edges of  $D_\eps$.

Let $f\in\Omega(D,u)$ be the unique function on $D$ with boundary values $1$ on $bc$ and $0$ on $da$, free on the other boundaries,
and maximizing $\M(f)$; 
this differs from the previous case because $u$ is only
fixed on part of the boundary, but again existence and uniqueness follow from concavity and analyticity from arguments of \cite{Morrey}.
Then $f$ will satisfy \eqref{enharmpde} on $D$, with fixed boundary values $0$ on $da$ and $1$ on $bc$ respectively and 
along the free
boundary $ab\cup cd$, $f$ will satisfy the condition that $\frac{dx}{f_y}-\frac{dy}{f_x}=0$ 
(this is in fact the Euler-Lagrange condition
along the free boundary). 

As before the function $f_\eps$ on $D_\eps$ maximizing $M$ will lie within $o(1)$ of $f$. 
Using edge energies $\eps^2$, let $g_\eps$ be the enharmonic conjugate of $f_\eps$, defined on the dual 
graph $D'_\eps$ and with value zero on the dual vertex
$v_{ab}$. Let $R_\eps=g_\eps(v_{cd})$ be the value at the opposite dual vertex. 
Since the sum of the energies is the area of the image under the rectangle tiling, 
we have 
$$R_\eps =\eps^2(\text{\# edges of $D_\eps$})+o(1) = 2\text{Area}(D)+o(1).$$
The above convergence argument applied to $g_\eps$ shows that $g_\eps$ converges to $g$, 
the maximizer of $\M$ with boundary values $A=2\text{Area}(D)$ and $0$ on $cd$ and $ab$ respectively. 

It remains to show that $f$ and $g$ are enharmonic conjugates.
The enharmonic conjugate $f^*$ of $f$ is the enharmonic function (well defined up to an additive constant)
which is constant along $ab$ and along $cd$,
with free boundary values along $bc$ and $da$. These are precisely the boundary conditions satisfied by the
function $g$, except that the value of the constant along $cd$ might be different, so we have that 
$f^*= Bg$ for some constant $B$. The map $(x,y)\mapsto (f^*,f)$ is area preserving except for a factor $2$ 
(the Jacobian is identically $2$, by (\ref{CR}))
so the image of $D$ has area $A$, which means the boundary value of $f^*$ 
along $bc$ must also be $A$. Thus $f^*=g$.

This proves that $(f_\eps,g_\eps)$ converges to $(f,g)$. In the limit we have a real-analytic mapping $(g,f)$ 
from $D$ to the rectangle $[0,A]\times[0,1]$.
\end{proof}

\section{Realizing number fields}\label{fields}

\noindent{\emph{Proof of Theorem \ref{allfields}}.}
Let $\G$ be a $d+1$-star graph with vertices $\{v,v_0,\dots,v_d\}$, with $v$ being the central vertex.
Take boundary $B$ consisting of all vertices except $v$. Let $e_i$ be the energy on edge $vv_i$
and take rational boundary values $a_i$ at $v_i$, with $a_0<\dots<a_d$. 
The enharmonic equation for the value $x$ at $v$ is
\be\label{staree}\sum_{i=0}^d\frac{e_i}{x-a_i} = 0.\ee
This equation has exactly $d$ roots, and these are real and interlaced with the $a_i$, that is, there is 
one in each interval
$(a_i,a_{i+1})$ (this follows from Theorem \ref{A} above or simply from the fact that the roots are roots of the derivative
of $\prod(x-a_i)^{e_i}$).

Conversely let $p(x)$ be a polynomial with rational coefficients 
of degree $d$ with all real roots, interlaced with the $a_i$ as above.
Then we claim that 
there is a unique choice of rational $e_i$ so that the enharmonic equation (\ref{staree}) has exactly the same roots.

The map from the tuple of energies $\{e_i\}$ to the tuple of coefficients $\{b_i\}$ of 
the polynomial $p(x) = b_0+b_1x+\dots+b_d x^d$
is a rational linear map from $\R^{d+1}$ to itself.
Specifically its matrix consists of elementary functions in the $a_i$ (after removing $a_{i-1}$ in those for
the $i$th column)
illustrated for the case $d=3$:
$$\begin{pmatrix}b_0\\b_1\\b_2\\b_3\end{pmatrix}
=\begin{pmatrix}
-a_1 a_2 a_3 & -a_0 a_2 a_3 & -a_0 a_1 a_3 & -a_0 a_1 a_2 \\
a_1 a_2+a_3 a_2+a_1 a_3 & a_0 a_2+a_3 a_2+a_0 a_3 & a_0 a_1+a_3 a_1+a_0 a_3 & a_0 a_1+a_2 a_1+a_0 a_2 \\
-a_1-a_2-a_3 & -a_0-a_2-a_3 & -a_0-a_1-a_3 & -a_0-a_1-a_2 \\
1 & 1 & 1 & 1 \\
\end{pmatrix}\begin{pmatrix}e_0\\e_1\\e_2\\e_3\end{pmatrix}
$$

The determinant of this matrix is a polynomial of degree $\binom{d}{2}$ and has factors $a_i-a_j$ for each $i<j$,
so is the Vandermonde $\pm\prod_{i<j}(a_i-a_j)$ and is therefore nonzero. Thus the map from energies to 
coefficients $b_i$ is invertible over $\Q$.
This completes the proof of the claim. 

Now let $K$ be a totally real number field and $x\in K$ a primitive element (i.e. one for which $K=\Q[x]$).
Let $p(X)$ be its minimal polynomial. Let $r_1,\dots,r_d$ be the roots of $p(X)$, which are real.
Replacing $x$ with $x+m$ for a large enough integer $m$ we
can assume that all $r_i>0$. 
Let
$a_1,\dots,a_{d+1}$ be positive rationals interlacing them. 
Now do the above construction with parameters $a_i$.
\hfill$\square$
\medskip

Note that as a simple corollary to this proof, by letting the $e_i$ be arbitrary positive rationals, modulo a global scale,
we can parameterize the polynomials with roots interlaced with the $a_i$ as a simplex
$$\{(e_0,\dots,e_d)~|~\sum e_i = 1\}.$$

\section{Other questions}\label{questions}

\begin{enumerate}
\item
Can one use the volume-preserving character of the map $\log\circ\Psi\circ\exp$ to compute the cardinality 
of $\Sigma$?
This is NP hard in general but integrating the Jacobian over projective space one can approximate the
degree of $\Psi$ and therefore $|\Sigma|$.  
\item
The minimal polynomials for geometric quantities (such as values of the harmonic function) seem to have
coefficients which are fixed-sign integer-coefficient polynomials in the edge energies. Can one give a
combinatorial description of these polynomials?
\item
What can be said about solutions to the enharmonic equation in the case when the boundary values $u$ are complex? The number of solutions is the same,
but there is no longer the same interpretation in terms of orientations.
\end{enumerate}




\section*{Acknowledgments} 
We thank the anonymous referee for correcting a mistake in an earlier version of Theorem \ref{J=1}.
We also thank Michael Bush, Jeff Lagarias, Russell Lyons, Sanjay Ramassamy, G\"unther Rote, Raman Sanyal, Bernd Sturmfels, and Cynthia Vinzant for helpful suggestions and conversations.

\bibliographystyle{amsplain}


\begin{dajauthors}
\begin{authorinfo}[a]
  Aaron Abrams\\
  Mathematics Department\\
  Washington and Lee University\\ Lexington VA 24450;\\
  abramsa at wlu.edu \\
  
\end{authorinfo}
\begin{authorinfo}[r]
  Richard Kenyon\\
  Department of Mathematics\\ Brown University\\ Providence, RI 02912\\
  rkenyon at math.brown.edu \\
\end{authorinfo}
\end{dajauthors}

\end{document}